\newtheorem{theorem}{Theorem}
\numberwithin{theorem}{section}
\newtheorem{proposition}[theorem]{Proposition}
\newtheorem{corollary}[theorem]{Corollary}
\newtheorem{definition}[theorem]{Definition}
\newtheorem{example}[theorem]{Example}
\newtheorem{conjecture}[theorem]{Conjecture}
\def\R{{\mathbb R}}
\def\Q{{\mathbb Q}}
\title{Gram Determinants of Real Binary Tensors}
\author{Anna Seigal}
\date{}
\begin{document}

\maketitle

\begin{abstract}

A binary tensor consists of $2^n$ entries arranged into hypercube format $2 \times 2 \times \cdots \times 2$. There are $n$ ways to flatten such a tensor into a matrix of size $2 \times 2^{n-1}$. For each flattening, $M$, we take the determinant of its Gram matrix, ${\rm det }(M M^T)$. We consider the map that sends a tensor to its $n$-tuple of Gram determinants. We propose a semi-algebraic characterization of the image of this map. This offers an answer to a question raised by Hackbusch and Uschmajew concerning the higher-order singular values of tensors.

\end{abstract}

\section{Introduction} \label{1}

The Gram determinants of a real binary tensor of format $2 \times 2 \times \cdots \times 2$ ($n$ times) are an $n$-tuple of quadratic invariants of the tensor. We introduce the {\em Gram locus}, the locus of  tuples that arise as the Gram determinants of a real binary tensor. Here, the Gram locus is equal to the ``set of feasible higher-order singular values'', from \cite{HU}, under change of coordinates. The Gram determinants offer  a convenient set of coordinates for studying the higher order singular values of a tensor.

In Theorem \ref{conv} we find the convex hull of the Gram locus for real binary tensors. It is a convex polytope that we describe explicitly. Its facet defining inequalities are that each Gram determinant is bounded by the sum of the others. We give a sum-of-squares proof. In Theorem \ref{thm:neq3}, we express the Gram locus as a semi-algebraic set for the case of $2 \times 2 \times 2$ tensors. The semi-algebraic description determines whether a tuple lies in the Gram locus or its complement, and characterizes tuples on the boundary. The non-linear part of the boundary of the Gram locus is Figure \ref{fig1}, and it is depicted in the highest higher order singular value coordinates in Figure \ref{fig3}. Examining tensors on the boundary gives a counter-example to a Conjecture stated in Section 1 of \cite{HU}: Example \ref{counter} is a tensor which lies on the boundary of the feasible set, but whose higher order singular values in each flattening are distinct. Its singular values are located at the black dot in Figure \ref{fig3}.

Conjecture \ref{conj:main} proposes the general form for the Gram locus. It has a concise expression as the non-negativity of a single polynomial in the Gram determinants. 

Finally, Section \ref{4} gives a partial answer to \cite[Problem 1.6]{HU}, characterizing the tensors whose higher order singular values coincide. In the case of matrices, agreement of singular values implies orthogonal equivalence. Theorem \ref{2x2x2} shows that the hyperdeterminant bridges the gap between orthogonal equivalence of tensors and the higher order singular value decomposition in the $2 \times 2 \times 2$ case. The $2 \times 2 \times 2$ tensor format is described in the following example.


\begin{example} \label{ex1}The $2 \times 2 \times 2$ tensor $(a_{ijk})$, $0 \leq i,j,k \leq 1$, has eight entries which populate the vertices of the three-cube. It has three flattenings, each of size $2 \times 4$:
$$ \begin{bmatrix} a_{000} & a_{001} & a_{010} & a_{011} \\ a_{100} & a_{101} & a_{110} & a_{111} \end{bmatrix} \qquad \begin{bmatrix} a_{000} & a_{001} & a_{100} & a_{101} \\ a_{010} & a_{011} & a_{110} & a_{111} \end{bmatrix} 
\qquad
\begin{bmatrix} a_{000} & a_{010} & a_{100} & a_{110} \\ a_{001} & a_{011} & a_{101} & a_{111}  \end{bmatrix} .$$
For the $i$th flattening $M$ we find $d_i : = {\rm det}(M M^T)$. For instance, the first Gram determinant is
\begin{small}
$$ d_1 = {(a_{000} a_{101} - a_{001} a_{100})}^2 + {(a_{000} a_{110} - a_{010} a_{100} )}^2 + {(a_{000} a_{111} - a_{011} a_{100} )}^2 + $$
$$ \hspace{4.2ex} {(a_{001} a_{110} - a_{010} a_{101} )}^2 + {(a_{001} a_{111} - a_{011} a_{101} )}^2 + {(a_{010} a_{111} - a_{011} a_{110} )}^2 ,$$
\end{small}
A computation reveals that the linear combination $d_2 + d_3 - d_1$ can be written as a sum of three squared terms:
$$ 2{(a_{000}a_{011} - a_{010}a_{001})}^2 + 2{(a_{100}a_{111} - a_{110}a_{101})}^2 + {( a_{010}a_{101} + a_{001}a_{110} - a_{011}a_{100} - a_{000}a_{111})}^2$$
The sum-of-squares certificate certifies that the expression is non-negative for all real values of the variables.
\end{example}

Take a tensor of format $2 \times 2 \times \cdots \times 2$. Each principal flattening is a matrix with two rows and $2^{n-1}$ columns, obtained by combining the indices from all but one direction. Denoting the rows by vectors ${\bf v}$ and ${\bf w}$, the Gram matrix is
$$ \begin{bmatrix} \leftarrow & {\bf v} & \rightarrow \\ \leftarrow & {\bf w} & \rightarrow \end{bmatrix} \cdot \begin{bmatrix} \uparrow & \uparrow \\ {\bf v} & {\bf w } \\ \downarrow & \downarrow \end{bmatrix} = \begin{bmatrix} {|| {\bf v} ||}^2 & {\langle {\bf v} , {\bf w} \rangle} \\  {\langle {\bf v} , {\bf w} \rangle} & {|| {\bf w} ||}^2 \end{bmatrix} .$$
Its determinant is given by the Cauchy-Schwarz expression $ {||{\bf v}||}^2 {||{\bf w}||}^2 - {\langle {\bf v} , {\bf w} \rangle}^2 $. For the $i$th flattening, this is the $i$th {\em Gram determinant}, denoted $d_i$. By the Cauchy-Binet formula, it is the sum of squares of the $2 \times 2$ minors of the $i$th flattening matrix.  

\begin{definition} Let $n \geq 2$. Consider real binary tensors of format $2 \times 2 \times \cdots \times 2$ ($n$ times). The map $\mathcal{G}$ sends a real binary tensor to its tuple of $n$ Gram determinants:
$$ \mathcal{G} : \R^{2} \otimes \cdots \otimes \R^{2} \to \R^n $$
$$ \hspace{19.5ex} ( a_{ij\ldots k} ) \mapsto ( d_1, \ldots, d_n ) .$$
The map scales by a constant factor under rescaling the input tensor. We define the {\em Gram locus} to be the image $\mathcal{G}(\mathcal{B})$, where $\mathcal{B}$ is the unit ball of tensors whose norm does not exceed one:
$$ \mathcal{B} = \left\{ ( a_{i j \ldots k}) \in \R^{2} \otimes \cdots \otimes \R^{2} : \sum_{i j \ldots k} a_{i j \ldots k}^2 \leq 1 \right\} .$$
\end{definition}

Each Gram determinant $d_i$ is a polynomial of degree four in the entries of the tensor; the map $\mathcal{G}$ is given by $n$ homogeneous degree four polynomials.

\bigskip

The Gram determinant map $\mathcal{G}$ gives the higher order singular values of a binary tensor, as follows. The higher order singular values of a tensor, introduced in \cite{LMV}, are the singular values of its $n$ principal flattenings, the non-negative square roots of the eigenvalues of the $n$ Gram matrices. Just as the singular values of a matrix describe it up to orthogonal change of basis, via the singular value decomposition (SVD), the higher order singular values give the corresponding multilinear structure of a tensor, via the higher order singular value decomposition (HOSVD). The trace of any Gram matrix, $t$, is the sum of the squares of the entries of the original tensor, its squared Frobenius norm, hence is unchanged by the choice of flattening. Thus the higher order singular values from the $i$th flattening are the non-negative solutions to the univariate polynomial in $x$: 
$$ x^4 - t x^2 + d_i .$$
Therefore, the map that sends a binary tensor to its higher order singular values is obtained by composing $\mathcal{G}$ with $n$ maps
\begin{equation}\label{coord} d_i \mapsto \left( \sqrt{\frac{t + \sqrt{t^2 - 4d_i}}{2}} , \sqrt{\frac{t - \sqrt{t^2 - 4d_i}}{2}} \right) .\end{equation}
The first coordinate of this map sends $d_i$ to the largest higher order singular value in that flattening. Characterizing feasible combinations of higher order singular values is an open problem \cite{HU}. In this paper, we use the Gram locus to make first steps towards solving it. 


For any combination of Gram determinants, a dimension count shows there generically exists a $(2^n - n)$-dimensional family of complex tensors whose image under $\mathcal{G}$ is those determinants. We seek a {\em real} tensor in the pre-image. The parts of the image of $\mathcal{G}$ where some $d_i$ almost vanishes are of particular interest: these are tensors which can be approximated to good accuracy by a tensor of smaller flattening rank than its dimension, as in \cite{LMV2}.

We note that the analogue of the Gram determinants can also be studied in the case of hierarchical tensor representations, including the Tensor Train (TT) / Matrix Product State representation \cite[Chapter 12]{H}. The size of the TT format required to represent a tensor is given by the ranks of the flattenings obtained by grouping the first $j$ indices for the rows, with the remaining indices forming the columns, for $1 \leq j \leq d-1$.
A tensor being representable by a TT format with deficient $j$th bond dimension is equivalent to the determinant of that flattening vanishing.

\bigskip

The Gram locus $\mathcal{G}(\mathcal{B})$ is not convex. A natural first outer approximation is its convex hull, which the following theorem describes.

\begin{theorem}\label{conv}
Let $n \geq 2$, and take the map $\mathcal{G}$ and the unit ball $\mathcal{B}$ as above. The boundary of the convex hull of the Gram locus $\mathcal{G}(\mathcal{B})$ is described by the following linear inequalities in the determinants $d_i$:
$$ d_i \leq \sum_{j \neq i} d_j ,\qquad 0 \leq d_i \leq \frac{1}{4} , \qquad 1 \leq i \leq n .$$
In particular, this is a convex polytope with $2^n - n$ vertices, namely the point $(0,\ldots,0)$ and all points $(\frac{1}{4},\ldots,\frac{1}{4},0,\ldots,0)$ consisting of any $i \geq 2$ coordinates $\frac{1}{4}$, and the remaining coordinates zero.
\end{theorem}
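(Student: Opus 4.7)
The plan is to establish two containments: the polytope $P$ cut out by the stated linear inequalities contains $\mathcal{G}(\mathcal{B})$, and every listed vertex of $P$ lies in $\mathcal{G}(\mathcal{B})$. Since $P$ is the convex hull of its vertices, this forces $P = \mathrm{conv}(\mathcal{G}(\mathcal{B}))$, and the stated inequalities describe its boundary.

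For the containment $\mathcal{G}(\mathcal{B}) \subseteq P$, the bounds $0 \leq d_i \leq \tfrac14$ are elementary: $d_i = \|\mathbf v\|^2 \|\mathbf w\|^2 - \langle \mathbf v, \mathbf w\rangle^2 \geq 0$ by Cauchy--Schwarz, and since $\|\mathbf v\|^2 + \|\mathbf w\|^2 \leq 1$, AM--GM gives $d_i \leq \|\mathbf v\|^2\|\mathbf w\|^2 \leq \bigl((\|\mathbf v\|^2+\|\mathbf w\|^2)/2\bigr)^2 \leq \tfrac14$. The main obstacle is the triangle-type inequality $d_i \leq \sum_{j\neq i} d_j$, which I would prove by an explicit sum-of-squares identity generalizing the $n=3$ calculation in Example \ref{ex1}. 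Decompose $d_j = \sum_S d_{j,S}$ over non-empty $S \subseteq \{1,\ldots,n\}\setminus\{j\}$, where $d_{j,S}$ collects the squared $2\times 2$ minors whose two columns of the $j$-th flattening differ in exactly the positions $S$. The axis-aligned symmetry $d_{j,\{k\}} = d_{k,\{j\}}$ cancels all $|S|=1$ contributions when $\sum_{j\neq i} d_j - d_i$ is expanded; grouping the remainder by $T = S \cup \{j\}$ and setting $f_T = \sum_{j\in T} d_{j,T\setminus\{j\}}$ yields
$$ \sum_{j\neq i} d_j - d_i = \sum_{|T|\geq 2,\, i\notin T} f_T + \sum_{|T|\geq 3,\, i\in T}\bigl(f_T - 2 d_{i,T\setminus\{i\}}\bigr). $$
Each $f_T$ is already a sum of squares; the remaining task is an SOS certificate for each summand of the second sum.

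Since $d_{j,T\setminus\{j\}}$ only uses entries of the sub-tensor indexed by $T$ (summed over the outside coordinates), this reduces to the top-direction inequality on an arbitrary order-$k$ binary tensor with $k\geq 3$: $\sum_{j\neq i} d_{j,\mathrm{top}} \geq d_{i,\mathrm{top}}$, where $d_{j,\mathrm{top}} := d_{j,\{1,\ldots,k\}\setminus\{j\}}$ collects the minors whose two columns are antipodal. Introduce the quadratic invariants $X_{\mathbf u} = a_{\mathbf u} a_{\bar{\mathbf u}}$; since $X_{\mathbf u} = X_{\bar{\mathbf u}}$, they descend to a function $X : G \to \R$ on the quotient group $G := (\Z/2\Z)^k / \langle \mathbf 1 \rangle$. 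A direct check gives
$$ d_{j,\mathrm{top}} = \tfrac{1}{2}\sum_{g\in G}\bigl(X(g) - X(g+[e_j])\bigr)^2, $$
and Plancherel on $G$ converts this to $\tfrac{2}{|G|}\sum_{\chi([e_j])=-1}\widehat X(\chi)^2$. Because $[e_1] + \cdots + [e_k] = 0$ in $G$, every character $\chi$ satisfies $\chi([e_j]) = -1$ for an even number of indices $j$; hence whenever $\chi([e_i]) = -1$ there exists some $j \neq i$ with $\chi([e_j]) = -1$, making the coefficient of each $\widehat X(\chi)^2$ in $\sum_{j\neq i} d_{j,\mathrm{top}} - d_{i,\mathrm{top}}$ non-negative. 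Since $\widehat X(\chi)$ is a real quadratic polynomial in the tensor entries, its square is a quartic SOS and the full claim follows. I expect this Fourier--combinatorial step to be the technical heart of the argument.

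For $P \subseteq \mathrm{conv}(\mathcal{G}(\mathcal{B}))$ I would realize each vertex explicitly. The origin is the image of the zero tensor. For a subset $I \subseteq \{1,\ldots,n\}$ with $|I| = k \geq 2$, let $T_I$ be the unit-norm tensor that restricts to $\tfrac{1}{\sqrt 2}\bigl(e_0^{\otimes k} + e_1^{\otimes k}\bigr)$ on the slots indexed by $I$ and has $e_0$ in each remaining slot; for $i \in I$ the $i$-th flattening of $T_I$ has two orthogonal rows of squared norm $\tfrac12$, giving $d_i = \tfrac14$, while for $i \notin I$ the $i$-th flattening is rank one, giving $d_i = 0$. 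A short case analysis of tight constraints at a vertex of $P$ shows that every vertex coordinate lies in $\{0, \tfrac14\}$, and the triangle inequalities exclude the $0/\tfrac14$-valued points with exactly one coordinate equal to $\tfrac14$. This produces precisely $1 + \sum_{k=2}^n \binom{n}{k} = 2^n - n$ vertices, completing the proof.
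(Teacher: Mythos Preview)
Your argument is correct, and the overall architecture matches the paper's: the easy bounds $0\le d_i\le\tfrac14$, explicit tensors realizing each vertex, and a sum-of-squares proof of $\sum_{j\neq i}d_j-d_i\ge0$ obtained by stratifying minors according to the set $T$ of indices where the two multi-indices differ and reducing to the ``top'' piece on a $2^{|T|}$ sub-tensor. Where you genuinely diverge is in how you handle that top piece. The paper proves $D^{(m)}_m\ge0$ by an inductive geometric decomposition of the $(m{-}1)$-cube: it peels off $2^{m-3}$ copies of the base case $D^{(3)}_3$ (each a perfect square) and is left with two copies of $D^{(m-1)}_{m-1}$. You instead observe that the antipodal minors are differences $X_{\mathbf u}-X_{\mathbf u+e_j}$ of the quadratic invariants $X_{\mathbf u}=a_{\mathbf u}a_{\bar{\mathbf u}}$, pass to the quotient group $G=(\Z/2\Z)^k/\langle\mathbf 1\rangle$, and apply Plancherel to diagonalize the problem; non-negativity then drops out of the parity fact that every character of $G$ is $-1$ on an \emph{even} number of the generators $[e_j]$. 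Your route is shorter and more conceptual, and it makes the SOS structure transparent without induction; the paper's route, by contrast, produces an explicit certificate whose terms are sparse four-term squares and whose total count can be written in closed form (the Proposition immediately following the proof). One minor wording issue: your phrase ``summed over the outside coordinates'' should read ``summed over all fixings of the outside coordinates''---each fixing gives a separate instance of the top-direction inequality, and these are added; otherwise the reduction is exactly right.
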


When $n = 2$, we are in the case of a $2 \times 2$ matrix. It is well known that the two Gram determinants are equal. The inequalities simplify to $0 \leq d_1 = d_2 \leq \frac{1}{4}$.

The constant bounds on the Gram determinants constrain them to the cube $[0,\frac{1}{4}]^n$. The other linear inequalities are satisfied on a proportion $1 - \frac{1}{(n-1)!}$ of this cube. The volume occupied by tuples with $d_1 \geq d_2 + \cdots + d_n$ is $\frac{1}{n!}$, and there are $n$ such regions that are excluded overall, a total volume of $\frac{1}{(n-1)!}$. This fraction is also the proportion of an arbitrarily small neighborhood of zero that is satisfied by the linear inequalities. 

The true image $\mathcal{G}(\mathcal{B})$ is a semi-algebraic subset of the convex hull. Its description relies on two polynomials. The first polynomial is the product of the linear conditions above:
$$ Q_1 = \prod_{i = 1}^n \left( \sum_{j \neq i} d_j  - d_i \right).$$
Inside the positive orthant, the non-negativity of $Q_1$ is equivalent to the non-negativity of each of its linear factors. The second polynomial is given by the following product of linear factors in the $\sqrt{d_i}$:
$$ Q_2 = \frac{1}{2} \times \prod_{ {i, j, \ldots ,k} \in \{ \pm 1\} } (i \sqrt{d_1} + j \sqrt{d_2} + \cdots + k \sqrt{d_n} ) .$$
This is a product of $2^n$ terms, yielding a polynomial of degree $2^{n-1}$ in the $d_i$. Each term appears twice in the product, up to global sign change. Hence $Q_2$ is a perfect square.

\begin{theorem}\label{thm:neq3}
Let $n=3$. The Gram locus $\mathcal{G} (\mathcal{B})$ is described, inside the cube $0 \leq d_i \leq \frac{1}{4}$, by the union of the following two semi-algebraic sets:
\begin{enumerate}
\item The region $Q_1 \geq Q_2$
\item The region $Q_1 \leq Q_2$ and
$ {(d_i - d_j)}^2 +\frac{1}{2} ( d_i+ d_j) \leq \frac{3}{16} $
for all $\{ i, j \} \subset \{ 1, 2, 3 \}$. 
\end{enumerate}
\end{theorem}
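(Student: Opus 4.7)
My plan uses the $O(2)^3$-invariance of $\mathcal{G}$. Via the higher order singular value decomposition, every $T \in \R^{2}\otimes \R^{2}\otimes \R^{2}$ is equivalent to one in \emph{all-orthogonal core} form: the two slices of the core in each mode are orthogonal $2\times 2$ matrices whose squared Frobenius norms are the higher order singular values in that mode. An all-orthogonal $2\times 2\times 2$ core has $8-3=5$ real degrees of freedom (one orthogonality relation per mode). Since each $d_i$ and the norm $t=\|T\|^2$ are $O(2)^3$-invariants, $\mathcal{G}$ factors through this $5$-parameter family and it suffices to study that restriction.

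The first concrete task is to write an explicit parameterization of all-orthogonal $2\times 2\times 2$ cores (for instance, via two orthonormal matrix slices in mode $1$ together with the extra orthogonality constraints imposed by modes $2$ and $3$), and to express $d_1, d_2, d_3$ and $t$ as polynomials in these parameters. The second task is elimination: compute, using resultants or Gr\"obner bases, the implicit semi-algebraic description of the image of the map $(\text{parameters}, t\leq 1)\mapsto (d_1,d_2,d_3)$. I expect the polynomials $Q_1-Q_2$ and $\frac{3}{16}-(d_i-d_j)^2-\frac{1}{2}(d_i+d_j)$ to appear as the defining equations of the branch locus of this projection, distinguishing the two regions of the theorem.

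The third task is to interpret the two regions and verify both containments. The forward direction (every tensor's Gram tuple lies in the described set) should follow from a sum-of-squares certificate in the tensor entries for the relevant polynomial identities, in the spirit of Example~\ref{ex1}: one identity underlies $Q_1-Q_2\geq 0$ in a parameter regime corresponding to one sign of Cayley's hyperdeterminant, while the quadratic conditions emerge in the complementary regime from Cauchy--Schwarz bounds together with the norm constraint $t\leq 1$. The reverse direction (every point in the described set arises from some tensor) is by explicit construction: a generic point of the $5$-parameter family realizes interior points of region~(1), while a restricted family (e.g.\ tensors of a specific orbit-type or with a fixed sign of the hyperdeterminant) fills out region~(2).

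The main obstacle is the elimination step, which is likely to demand substantial computer algebra given the degrees of the polynomials involved. A secondary obstacle is understanding the boundary locus $Q_1=Q_2$: this surface separates the two regions and should correspond to a distinguished class of tensors (for example those with vanishing hyperdeterminant, or with a specific rank pattern in one of the flattenings). Pinning down this locus is essential both for giving a uniform preimage construction that glues across the two regions and for identifying the tensor realizing the counter-example of Example~\ref{counter}.
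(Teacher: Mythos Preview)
Your proposal is a plan rather than a proof, and it takes a genuinely different route from the paper. The paper does not reduce to an all-orthogonal core and eliminate from a five-parameter family. Instead, following \cite{KPS}, it computes directly the \emph{branch locus} of $\mathcal{G}$ and of its restriction to the sphere $\partial\mathcal{B}=\{t=1\}$: the Zariski closure of the boundary of $\mathcal{G}(\mathcal{B})$ lies in the image of the critical locus, obtained as the vanishing of the maximal minors of the Jacobian of $(d_1,d_2,d_3)$ (resp.\ $(d_1,d_2,d_3,t)$). A Macaulay2 computation in $O_2^3$-invariant coordinates produces two polynomials $p,q$ whose irreducible factors are the linear walls $d_i$, $d_i-d_j$, $d_i-\tfrac14$ together with $Q=Q_1-Q_2$. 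The image is then a union of connected components of $[0,\tfrac14]^3\setminus V(pq)$, and each component is decided by exhibiting one preimage tensor or excluding it via Theorem~\ref{conv}. The three quadratic inequalities in region~(2) are not independent boundary pieces: they are the (double) curves along which $V(Q)$ meets the facets $d_i=\tfrac14$, used solely to pick out the component near $(\tfrac14,\tfrac14,\tfrac14)$.

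Your elimination strategy could in principle arrive at the same $Q$, and the invariant-coordinate idea matches the paper's own speed-up. But several of your intermediate expectations are off. There is no sum-of-squares certificate for $Q_1-Q_2\ge 0$ in the tensor entries, since $Q_1-Q_2$ genuinely changes sign on $\mathcal{G}(\mathcal{B})$; the paper never attempts one and relies entirely on the branch-locus/connected-component argument. The quadratic conditions of region~(2) do not arise from Cauchy--Schwarz plus $t\le 1$; they are the tangential intersection of $V(Q)$ with the cube facets. Most importantly, your plan is missing the decisive step: after elimination you only have a \emph{candidate} boundary, and you still must determine which components of its complement are hit. The paper does this by producing explicit tensors in each component, and that is where the proof actually lives---your ``explicit construction'' paragraph gestures at this but does not say how, and the hyperdeterminant-sign heuristic you offer for separating the two regions is speculation, not a mechanism the paper uses or confirms.
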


\begin{conjecture}\label{conj:main}
Let $n \geq 4$. The Gram locus $\mathcal{G} (\mathcal{B})$ is given by $ Q_1 \geq Q_2$ and $0 \leq d_i \leq \frac{1}{4} $ for $i = 1, \ldots, n$.
\end{conjecture}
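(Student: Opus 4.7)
The strategy is to establish both inclusions $\mathcal{G}(\mathcal{B}) \subseteq S$ and $S \subseteq \mathcal{G}(\mathcal{B})$, where $S$ denotes the semi-algebraic set $\{(d_1, \ldots, d_n) : Q_1 \geq Q_2,\ 0 \leq d_i \leq \tfrac{1}{4}\}$. The bounds $0 \leq d_i \leq \tfrac{1}{4}$ are already handled by Theorem \ref{conv}, so the genuinely new content is the polynomial inequality $Q_1 \geq Q_2$ (necessity) together with surjectivity onto this set (sufficiency).

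For the necessity direction I would seek an explicit sum-of-squares identity expressing $Q_1 - Q_2$, viewed as a polynomial in the $2^n$ tensor entries $a_{ij\cdots k}$, as an SOS. The inequality $Q_1 \geq 0$ already has an SOS proof, factor by factor, generalizing Example \ref{ex1}; one seeks the finer identity realizing the slack. Because $Q_2$ is a perfect square by the parity observation preceding Theorem \ref{thm:neq3}, both sides are manifestly non-negative, so the substance of the inequality is the quantitative gap rather than pointwise positivity. A promising route is to verify the SOS identity computationally for $n = 4$ (a calculation in $16$ variables with SOSTOOLS or Macaulay2) and to read off a combinatorial pattern of squared polynomials, indexed by the symmetries of the hypercube, that one then verifies in general by direct expansion. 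Alternatively, a Positivstellensatz certificate using the norm constraint $\sum a_{ij\cdots k}^2 \leq 1$ and the non-negativity of each individual $d_i$ as premises would suffice.

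For the sufficiency direction I would construct preimages using the HOSVD. Any tensor in $\mathcal{B}$ is orthogonally equivalent under the action of $O(2)^n$ to one in HOSVD core form, and each $d_i$ depends only on this core. It therefore suffices to parameterize a family of cores realizing every tuple in $S$. The extremal vertices of the convex-hull polytope from Theorem \ref{conv} correspond to very sparse explicit cores which already lie in $\mathcal{G}(\mathcal{B})$; because $\mathcal{G}$ is continuous and proper, a path-connectivity argument combined with an analysis of the algebraic boundary $Q_1 = Q_2$ as a branch locus of the fibers of $\mathcal{G}$ should fill in the rest. Concretely, one shows that crossing $Q_1 = Q_2$ corresponds exactly to a pair of real solutions in the fiber merging and becoming complex, which pins down the boundary of the image.

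The genuine obstacle is explaining the qualitative difference between $n = 3$ and $n \geq 4$. In Theorem \ref{thm:neq3}, region~2 explicitly exhibits tuples in the Gram locus with $Q_1 < Q_2$; the conjecture asserts that this phenomenon disappears in higher dimensions. The natural intuition is that $\deg Q_2 = 2^{n-1}$ grows much faster than $\deg Q_1 = n$, so for $n \geq 4$ the extra flattening directions provide enough freedom to rotate any would-be region-2 tensor into one whose first three Gram determinants now satisfy $Q_1 \geq Q_2$. Turning this heuristic into a rigorous argument---perhaps via a critical-point analysis on $\mathcal{B}$ ruling out interior minima of $Q_1 - Q_2$ below zero, or via a direct inductive reduction from a hypothetical $n$-dimensional counterexample to an $(n-1)$-dimensional one---is the principal difficulty in proving the conjecture.
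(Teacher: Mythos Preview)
The statement you are attempting to prove is labeled a \emph{Conjecture} in the paper, and the paper does not provide a proof. The only supporting evidence offered is (i)~an evaluation of $Q_1 - Q_2$ at the corner $(\tfrac14,\ldots,\tfrac14)$, showing that the value is positive for all $n \geq 4$ (whereas it is negative at $n=3$), which heuristically explains why the second semi-algebraic region of Theorem~\ref{thm:neq3} should no longer appear; and (ii)~numerical testing of the sufficiency direction on one million random tensors for $4 \leq n \leq 7$. There is therefore no ``paper's proof'' against which to compare your proposal; the problem is open.

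Your outline is a sensible research plan, and you correctly isolate the principal obstacle: ruling out the region-2 phenomenon for $n \geq 4$. One cautionary remark on the necessity direction: an unconditional sum-of-squares identity for $Q_1 - Q_2$ in the tensor entries cannot exist in any form that specializes to $n=3$, since Theorem~\ref{thm:neq3} explicitly exhibits tensors in $\mathcal{B}$ with $Q_1 < Q_2$. Any SOS or Positivstellensatz certificate must therefore depend on $n \geq 4$ in a way that genuinely breaks upon collapsing to three factors, so a combinatorial pattern that is merely ``read off from $n=4$ and generalized'' is unlikely to suffice without a separate argument explaining why three is exceptional. Note also that $\deg Q_1 = n$ while $\deg Q_2 = 2^{n-1}$ in the $d_i$, so $Q_1 - Q_2$ is not homogeneous and the norm constraint must enter any certificate non-trivially; already at $n=4$ this means degree-$32$ polynomials in $16$ variables, which is at the edge of what current SOS software handles.
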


Section \ref{3} explains why the second algebraic set from Theorem \ref{thm:neq3} doesn't appear in Conjecture \ref{conj:main}.
 Theorem \ref{conv} can be re-stated as saying that the convex hull of $\mathcal{G}(\mathcal{B})$ is given, inside the cube ${[0, \frac{1}{4}]}^n$, by $Q_1 \geq 0$. Since $Q_2$ is a square, the region $Q_1 \geq Q_2$ is strictly contained inside this convex set.

\section{The Convex Hull of the Gram Locus}

We begin by working through the main part of the proof of Theorem \ref{conv} in the case $n=3$.

\begin{example}[$2 \times 2 \times 2$ tensors]\label{neq3}
Consider the entries of the tensor as the vertices of the three-cube:
\vspace{-2ex}
\begin{figure}[H]
\centering
\includegraphics[width=4cm]{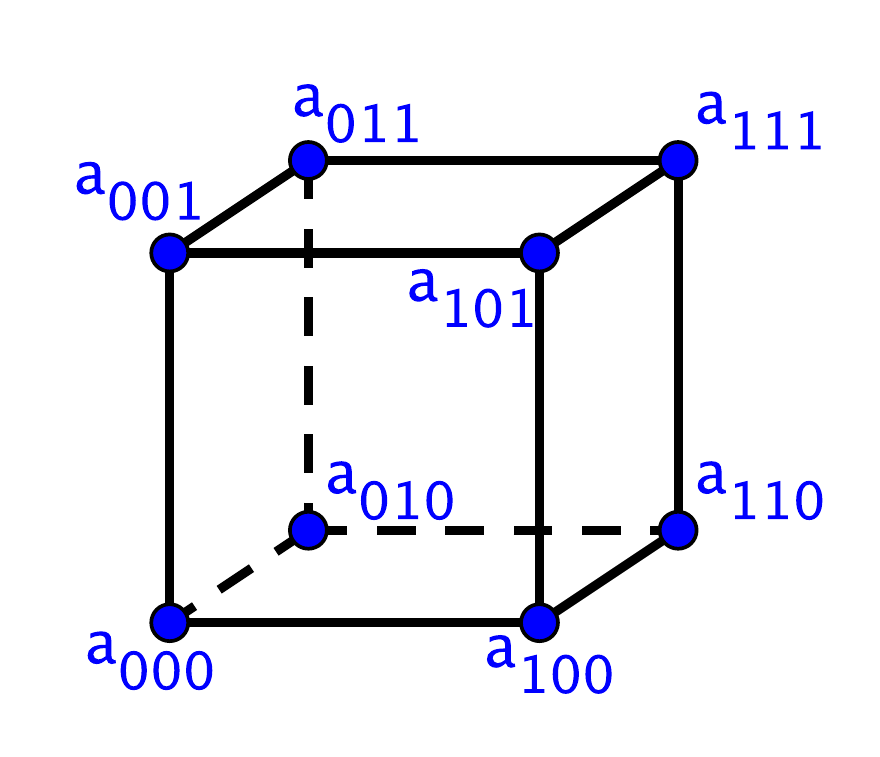}
\end{figure}
\vspace{-2ex}

Each of the three determinants is the sum of squares of the ${{ 4 \choose 2 }} = 6$ minors from a flattening in Example \ref{ex1}. These are combinations of four vertices in the cube. For example, the front face corresponds to the squared minor ${(a_{000} a_{101} - a_{001} a_{100})}^2$. This minor features in flattenings one and three, but not two. Every face of the cube appears as a squared term in two out of three determinants. The faces account for 12 of the 18 minors. The remaining six are unique to one flattening. They are the shaded crimson squares in Figure~\ref{3f}.
\begin{figure}[H]
\centering
\includegraphics[width=5cm]{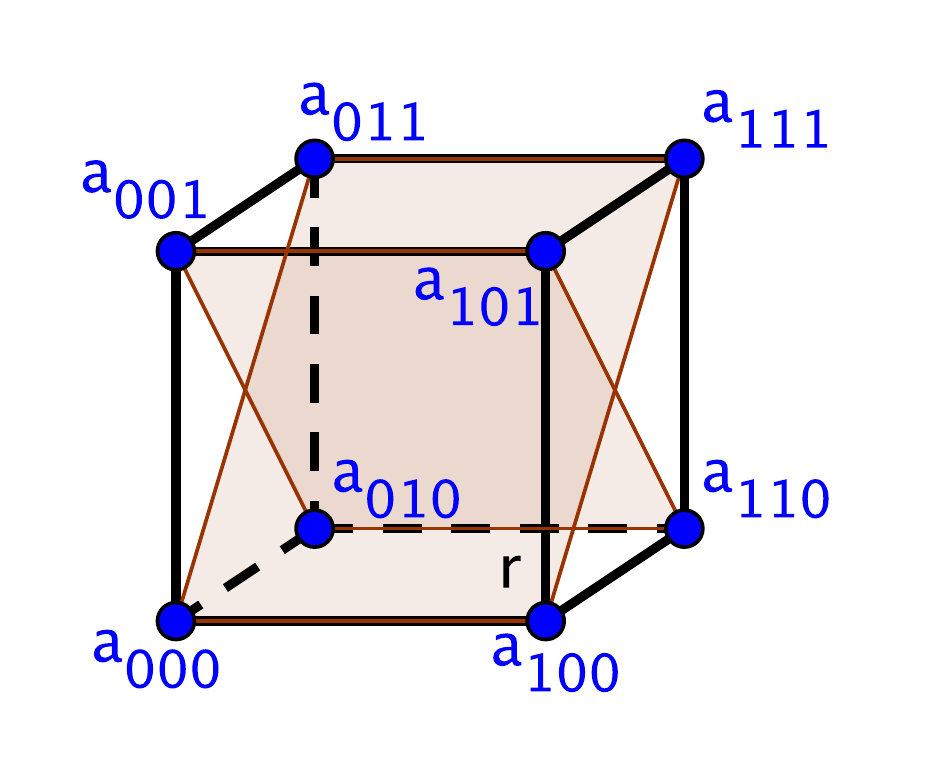}
\includegraphics[width=5cm]{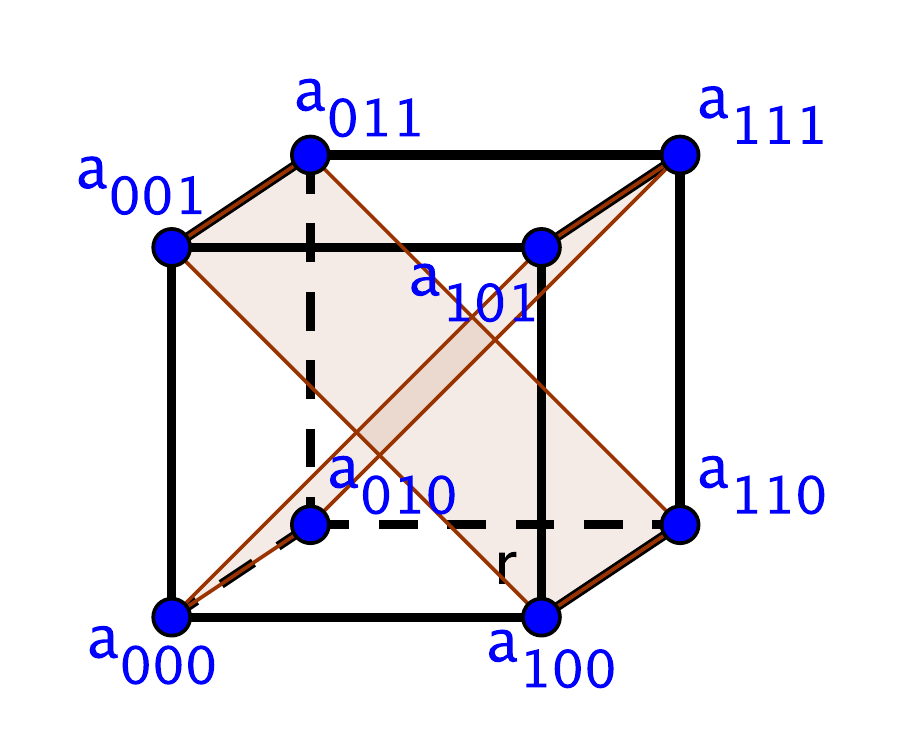}
\hspace{0.7ex}
\includegraphics[width=4.5cm]{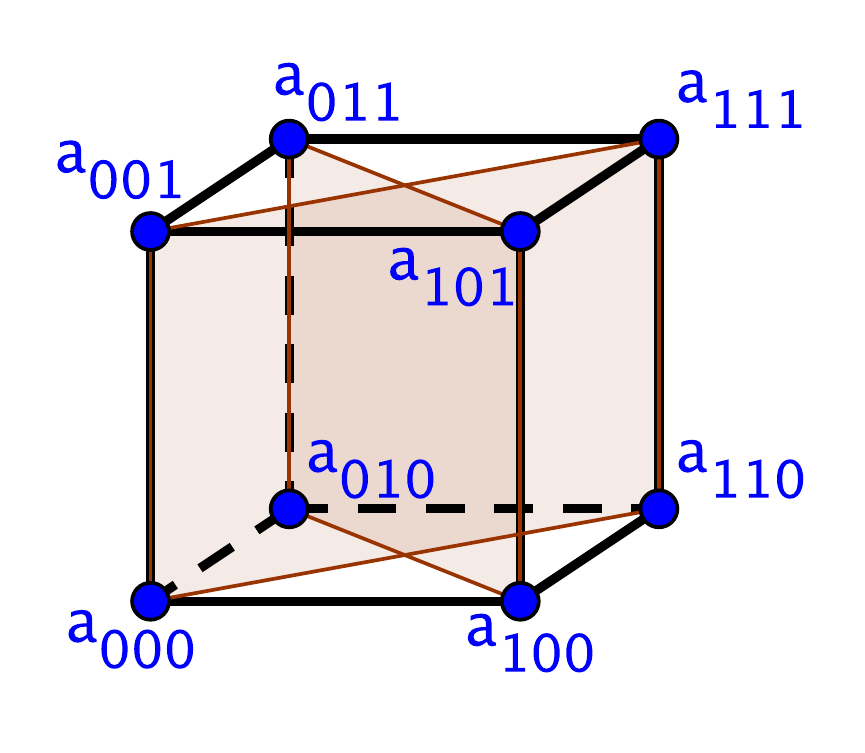}
\caption{The minors unique to flattenings one, two and three respectively}
\label{3f}
\end{figure}

The six crimson minors have monomials $a_{\bf i} a_{\bf j}$ where ${\bf i}$ and ${\bf j}$ are multi-indices in $\{0,1\}^3$ that differ in all three coordinates. Each such term is determined by the two other indices in the term whose first index is zero, so it is represented by a vertex in Figure \ref{square1}. The edges connect monomials that appear in the same minor. \vspace{-2ex}
\begin{figure}[H]
\centering
\includegraphics[width=3cm]{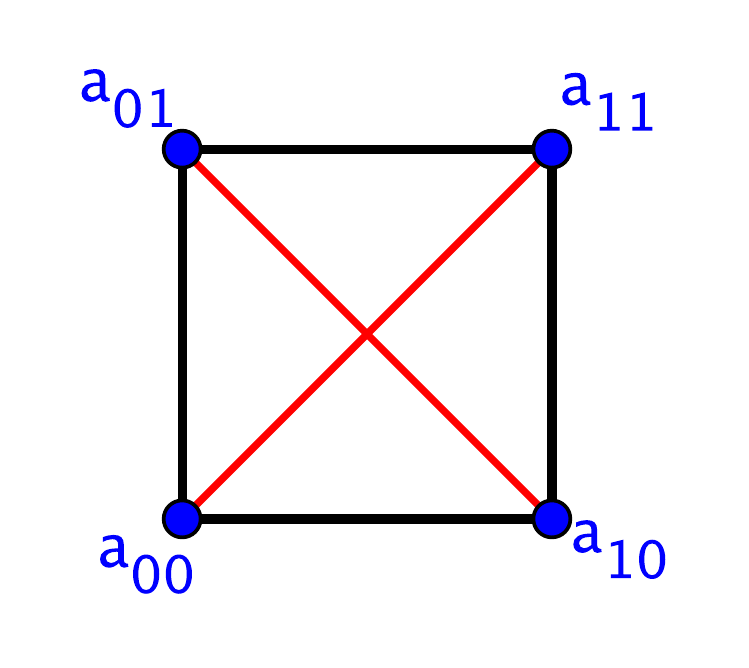}
\vspace{-2ex}
\caption{The minors unique to one flattening are represented by edges. The black edges are minors from flattenings two or three. The red diagonal edges are from flattening one.}
\label{square1}
\end{figure}
\vspace{-1ex}

The three Gram determinants are denoted $d_1$, $d_2$ and $d_3$. As in the statement of Theorem \ref{conv}, we aim to show that
$$ D^{(3)} : = d_2 + d_3 - d_1 \geq 0.$$
The other inequalities follow from $D^{(3)}$ by relabeling. Example \ref{ex1} gives a sum-of-squares certificate for the non-negativity of $D^{(3)}$. Below we carry out the sum-of-squares computation using the notation of the proof of Theorem \ref{conv}. Each determinant is already given by a sum-of-squares expression, and we show how to absorb the subtraction of $d_1$ into the expressions for $d_2$ and $d_3$.

The minors come in two types: the faces of the cube have monomials $a_{\bf i} a_{\bf j}$ where ${\bf i}$ and ${\bf j}$ differ in two indices, and the crimson minors have ${\bf i}$ and ${\bf j}$ differing in all three indices. We write
$$ D^{(3)} = D^{(3)}_2 + D^{(3)}_3 ,$$
where $D^{(3)}_m$ are the minors whose monomials differ in $m$ indices. We find a sum-of-squares certificate for the two pieces $D^{(3)}_2$ and $D^{(3)}_3$ individually.

All terms in $D^{(3)}_2$ that appear in $d_1$ also appear in either $d_2$ or $d_3$, and hence they cancel out in $D^{(3)}_2$. Therefore $D^{(3)}_2$ is a sum-of-squares polynomial consisting of all squared minors in $d_2$ or $d_3$ but not in $d_1$:
$$D^{(3)}_2 = 2{(a_{000}a_{011} - a_{010}a_{001})}^2 + 2{(a_{100}a_{111} - a_{110}a_{101})}^2 .$$

A direct computation shows that the combination of minors depicted in Figure \ref{square1} can be expressed as a perfect square:
$$D^{(3)}_3 = {( a_{010}a_{101} + a_{001}a_{110} - a_{011}a_{100} - a_{000}a_{111})}^2 .$$
These are summed to give a sum-of-squares expression for $D^{(3)}$.
\end{example}

\begin{example}[$2 \times 2 \times 2 \times 2$ tensors]Define
$ D^{(4)}: = d_2 + d_3 + d_4 - d_1 $. We have
$$D^{(4)} = D^{(4)}_2 + D^{(4)}_3 + D^{(4)}_4 $$
where, as above, $D^{(4)}_m$ consists of those minors from $D^{(4)}$ whose monomials $a_{\bf i} a_{\bf j}$ have ${\bf i}$ and ${\bf j}$ differing in $m$ indices. $D^{(4)}_2$ is already in sum-of-squares form:
\begin{footnotesize}
$$2{(a_{0000}a_{0011} - a_{0001}a_{0010} )}^2 + 2{(a_{0000}a_{0101} - a_{0100}a_{0001})}^2 + 2{(a_{0000}a_{0110} - a_{0010}a_{0100})}^2 + $$ $$ 2{(a_{1000}a_{1011} - a_{1001}a_{1010})}^2 + 2{(a_{1000}a_{1101} - a_{1100}a_{1001})}^2 + 2{(a_{1000}a_{1110} - a_{1010}a_{1100} )}^2 + $$
$$ 2{(a_{0100}a_{0111} - a_{0101} a_{0110} )}^2 + 2{(a_{0010} a_{0111} - a_{0110} a_{0011})}^2 + 2{(a_{0001} a_{0111} - a_{0011} a_{0101} )}^2 + $$
$$ 2{(a_{1100} a_{1111} - a_{1101} a_{1110} )}^2 + 2{(a_{1010} a_{1111} - a_{1110} a_{1011} )}^2 + 2{(a_{1001} a_{1111} - a_{1011} a_{1101} )}^2.$$
\end{footnotesize}
The piece $D^{(4)}_3$ has sum-of-squares certificate
\vspace{-1ex} 
\begin{footnotesize}
$${( a_{0100}a_{1010} + a_{0010}a_{1100} - a_{0110}a_{1000} - a_{0000}a_{1110})}^2 + {( a_{0101}a_{1011} + a_{0011}a_{1101} - a_{0111}a_{1001} - a_{0001}a_{1111})}^2 + $$
$$ {( a_{0010}a_{1001} + a_{0001}a_{1010} - a_{0011}a_{1000} - a_{0000}a_{1011})}^2 +  {( a_{0110}a_{1101} + a_{0101}a_{1110} - a_{0111}a_{1100} - a_{0100}a_{1111})}^2  + $$
$$ {( a_{0100}a_{1001} + a_{0001}a_{1100} - a_{0101}a_{1000} - a_{0000}a_{1101})}^2 + {( a_{0110}a_{1011} + a_{0011}a_{1110} - a_{0111}a_{1010} - a_{0010}a_{1111})}^2 +$$
$$ {(a_{0000}a_{0111} - a_{0001}a_{0110})}^2 + {(a_{0000}a_{0111} - a_{0010}a_{0101})}^2 + {(a_{0000}a_{0111} - a_{0100} a_{0011})}^2 + $$
$$ {(a_{1000}a_{1111} - a_{1001} a_{1110} )}^2 + {(a_{1000} a_{1111} - a_{1010} a_{1101} )}^2 + {(a_{1000} a_{1111} - a_{1100} a_{1011} )}^2 + $$
$$ {(a_{0001} a_{0110} - a_{0011} a_{0100} )}^2 + {(a_{0001} a_{0110} - a_{0101} a_{0010} )}^2 + {(a_{0010} a_{0101} - a_{0100} a_{0011} )}^2 + $$
$$ {(a_{1001} a_{1110} - a_{1011} a_{1100} )}^2 + {(a_{1001} a_{1110} - a_{1101} a_{1010} )}^2 + {(a_{1010} a_{1101} - a_{1100} a_{1011} )}^2, $$
\end{footnotesize}
and the final piece $D^{(4)}_4$ has sum-of-squares certificate
\begin{footnotesize}
$${(a_{0010} a_{1101} + a_{0111} a_{1000} - a_{0011} a_{1100} - a_{0110} a_{1001} )}^2 + {( a_{0000} a_{1111} - a_{0001} a_{1110} - a_{0100} a_{1011} + a_{0101} a_{1010} )}^2 + $$
$$ {(a_{0000} a_{1111} + a_{0111} a_{1000} - a_{0010} a_{1101} - a_{0101} a_{1010} )}^2 + {(a_{0100} a_{1011} + a_{0011} a_{1100} - a_{0001} a_{1110} - a_{0110} a_{1001} )}^2,$$
\end{footnotesize}
which we obtain as follows. The monomials in $D^{(4)}_4$ are of the form $a_{\bf i} a_{\bf j}$ where ${\bf i}$ and ${\bf j}$ differ in all four indices. As in Example \ref{neq3}, we can label the vertices of a three dimensional cube by such terms, by writing the indices that occur after the zero in the term that starts with a zero. The minors coming from $d_1$ are the red diagonal edges, with other minors labeled by black edges. We obtain:
\begin{figure}[H]
\centering
\includegraphics[width=4cm]{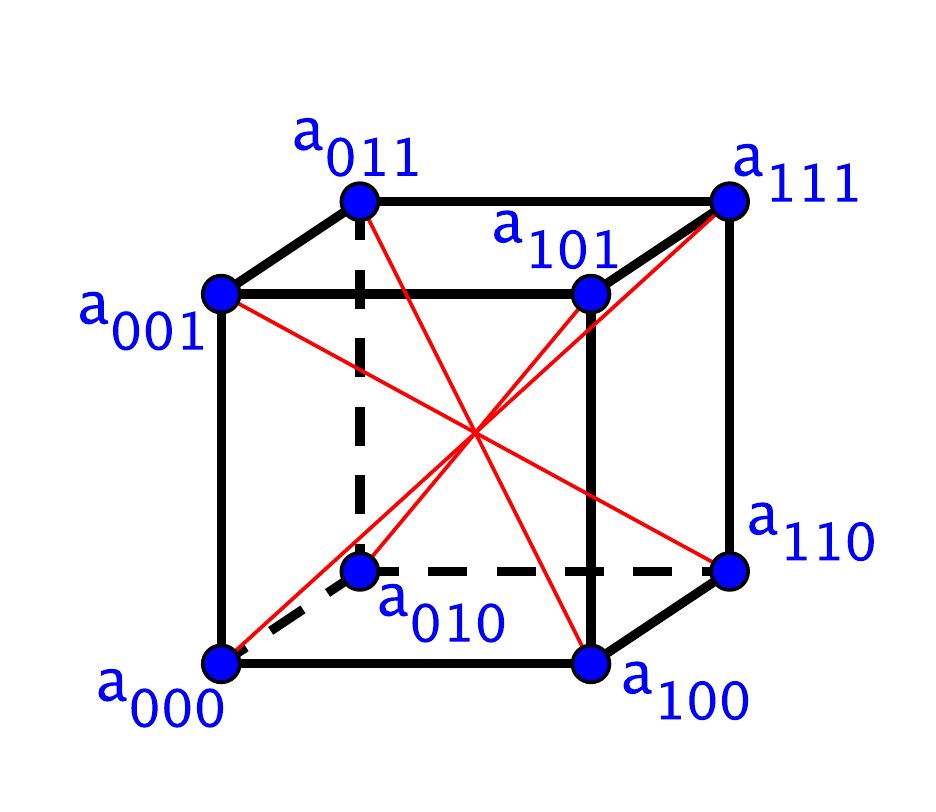}
\end{figure}
To show that the polynomial represented by this picture has a sum-of-squares certificate, we write it as the sum of four pieces whose shape is that in Figure \ref{square1}. Such pieces are the same as $D^{(3)}_3$ up to relabeling, hence they are perfect squares.
\begin{figure}[H]
\centering
\includegraphics[width=4cm]{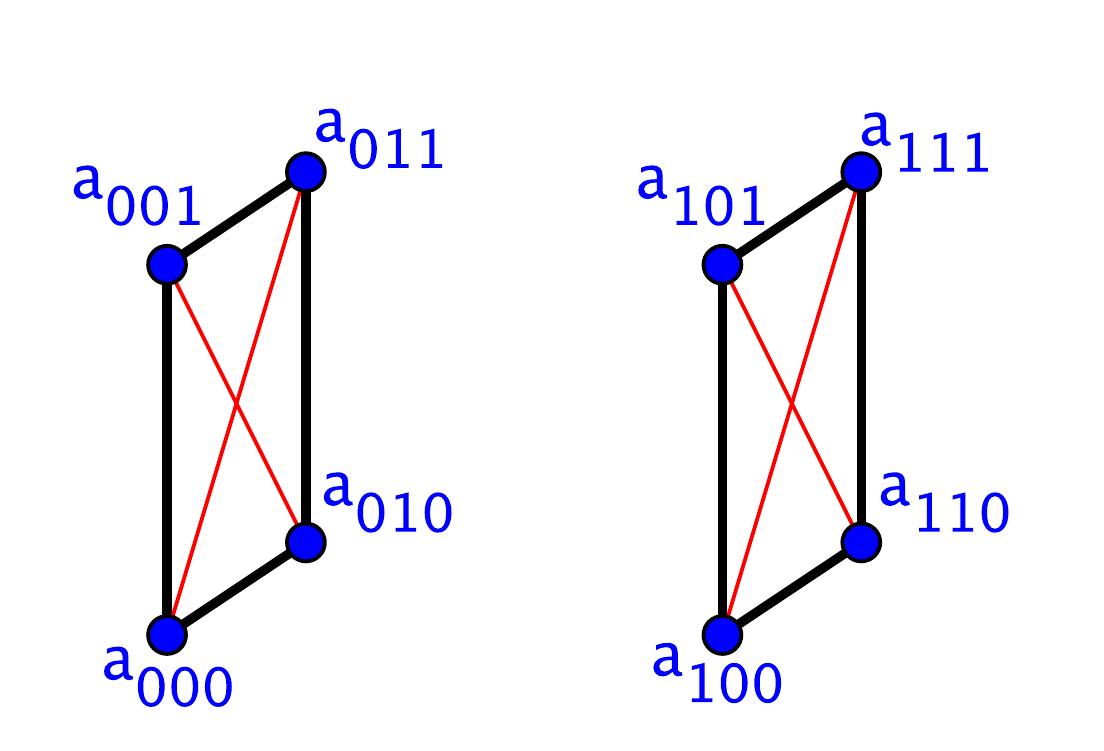} \quad
\includegraphics[width=5.2cm]{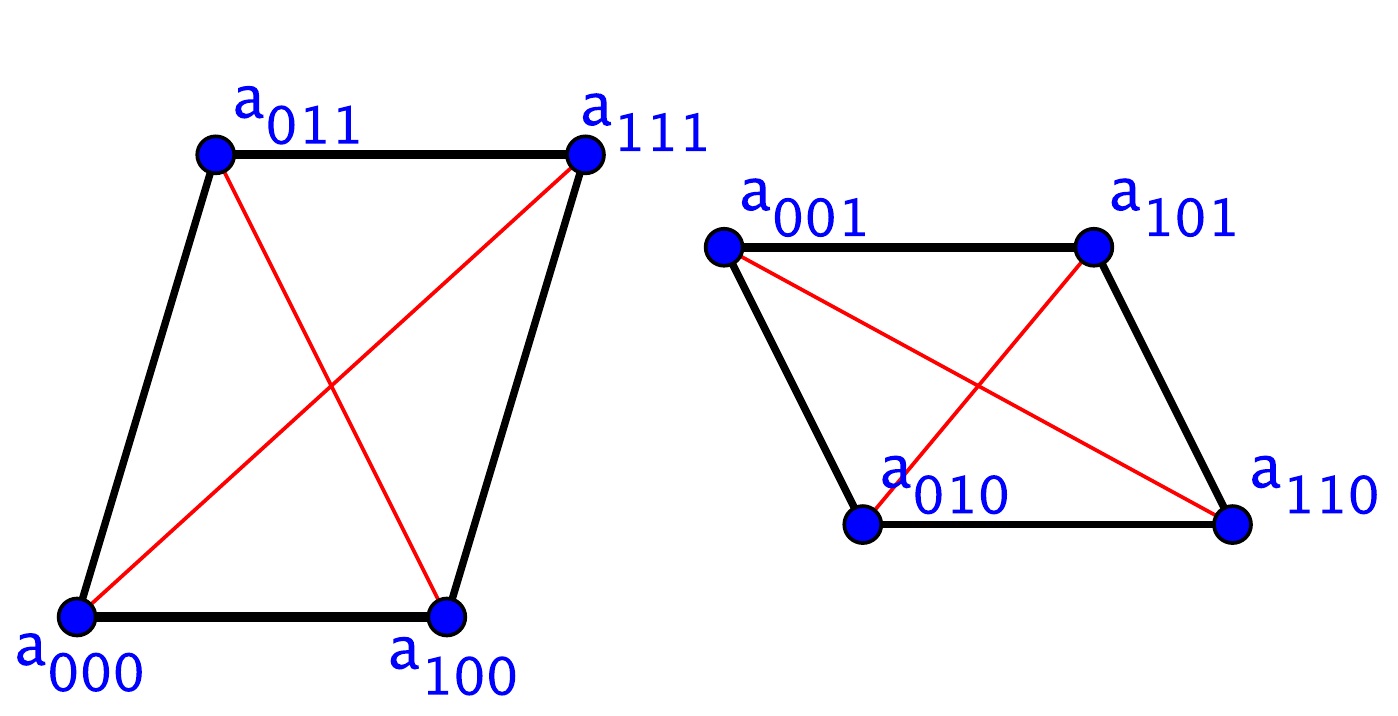}
\end{figure}
\end{example}

\bigskip

We now give the proof of Theorem \ref{conv}, which builds on the above cases via induction.

\begin{proof}[Proof of Theorem \ref{conv}]
With a flattening denoted
$$ \begin{bmatrix} \leftarrow & {\bf v} & \rightarrow \\ \leftarrow & {\bf w} & \rightarrow \end{bmatrix},$$
the trace of the Gram matrix is given by the expression ${||{\bf v}||}^2 + {||{\bf w}||}^2$ and the determinant is $ {||{\bf v}||}^2 {||{\bf w}||}^2 - {\langle {\bf v} , {\bf w} \rangle}^2 $. The Cauchy-Schwarz inequality shows that the lower bound for the determinant is 0. The upper bound is $\frac{1}{4}$, since this is the maximum value taken by the product of two numbers that sum to one. Thus the image is contained in the cube $[0,\frac{1}{4}]^n$.

The vertices of the polytope described by the linear inequalities are: the point $(0,0,\ldots,0) \in \R^n$, and all points consisting of $i$ coordinates $\frac{1}{4}$ and $n-i$ coordinates $0$, with $i \geq 2$. For fixed $i$, there are ${{ n \choose i}}$ such points. We first show that each of these vertices lies in the image $\mathcal{G}(\mathcal{B})$. The determinant tuple $(0,0,\ldots,0) \in \mathcal{S}$ is obtained from any rank one tensor. Consider the tensor $A$ with entries given by
$$ a_{00\ldots0} = \frac{1}{\sqrt{2}}, \quad a_{110\ldots0} = \frac{1}{\sqrt{2}} , \quad a_{ij\ldots k} = 0 \text{, otherwise} .$$
The first two flattenings have one non-zero entry in each of ${\bf v}$ and ${\bf w}$, with the two vectors ${\bf v}$ and ${\bf w}$ orthogonal. Hence the determinants of the corresponding Gram matrices both evaluate to $\frac{1}{4}$. For all other flattenings, ${\bf w} = 0$ and the Gram determinant is zero. Permuting indices, we see that all points with two coordinates $\frac{1}{4}$, and all others equal to zero, are in the image. Modifying the above example, so that the second non-vanishing entry is at $a_{1,1,\ldots,1,0,0,\ldots,0}$, with $i$ indices equal to 1, shows similarly that vertices with $i$ coordinates at $\frac{1}{4}$ are in the image $\mathcal{G}(\mathcal{B})$. This implies that the true convex hull of the Gram locus contains the one in the statement of the theorem.

\bigskip

It remains to show that all other points are outside the image of the map. This is equivalent to showing that the Gram determinants $d_i$ of a real binary tensor satisfy the inequality
\begin{equation} D^{(n)} := d_2 + d_3 + \cdots + d_n - d_1 \geq 0 .\end{equation} \label{lineard}
Each Gram determinant $d_i$ of a tensor $A$ is the sum of squares of the $2 \times 2$ minors of the $i$th flattening of $A$. That is, each $d_i$ is given by a sum-of-squares expression. The polynomial $D^{(n)}$ is degree four in the entries of the original tensor, and we seek a sum-of-squares certificate for it. The set-up is symmetric in the different $d_i$, so this certificate can be re-labeled to give the other parts of the boundary.

We first split up the polynomial $D^{(n)}$ into manageable pieces, and find a sum-of-squares certificate for each piece. The first Gram determinant can be written
$$ d_1 = \sum {( a_{0 {\bf i}} a_{1 {\bf j}} - a_{1 {\bf i}} a_{ 0 {\bf j}} )}^2 $$
where the sum is taken over all ${\bf i}, {\bf j} \subset {\{ 0, 1\}}^{n-1}$ with ${\bf i} \neq {\bf j}$. Similarly, the $k$th determinant is expressible in this form, where instead it is the $k$th index that is swapped in each term. The polynomial $D^{(n)}$ can thus be written in terms of degree two monomials $a_{{\bf i}} a_{\bf j}$, where ${\bf i}, {\bf j} \in {\{ 0, 1\}}^n$, and the multi-indices ${\bf i}$ and ${\bf j}$ differ in at least $2$ locations. For a monomial $a_{\bf i} a_{\bf j}$, let $m$ count the number of locations where ${\bf i}$ and ${\bf j}$ differ (so $2 \leq m \leq n$). Our manageable pieces arise from fixing the value of $m$. The value of $m$ is fixed on each summand of $D^{(m)}$, and we let $D^{(n)}_m$ denote the terms of $D^{(n)}$ with given value of $m$. We seek a sum-of-squares certificate for each piece $D^{(n)}_m$. Their sum gives a certificate for $D^{(n)}$. 

The rest of the proof proceeds as follows. We first find the sum-of-squares certificate for the piece $D^{(n)}_2$. We also examine $D^{(3)}$. Then we show that the polynomial $D^{(n)}_m$, with $m < n$, is equal to a sum of polynomials, each equal to $D^{(m)}_m$ up to relabeling indices. Finally we relate the structure of $D^{(m)}_m$ to $D^{(m-1)}_{m-1}$, and hence can conclude the proof by induction.

Terms in $D^{(n)}_2$ that come from $d_1$ are of the form
$ {( a_{0 {\bf i}} a_{1 {\bf j}} - a_{1 {\bf i}} a_{ 0 {\bf j}} )}^2 $,
where ${\bf i}$ and ${\bf j}$ differ in exactly one location. Without loss of generality, we can assume they differ in their first location, and that ${\bf i} = (0, \ldots )$. We can therefore re-write the above term as
$$ {( a_{0 0 {\bf k}} a_{1 1 {\bf k}} - a_{1 0 {\bf k}} a_{ 0 1 {\bf k}} )}^2, \qquad {\bf k} \in {\{ 0, 1\}}^{n-2} .$$
We observe that this term also appears in $d_2$. Relabeling the above example, we see that all $D^{(n)}_2$-terms in $d_1$ also appear in some other $d_k$, and hence they do not appear in $D^{(n)}_2$. Therefore $D^{(n)}_2$ is a sum-of-squares polynomial: it consists of all squared minors that appear in some $d_k$, $2 \leq k \leq n$, but not in $d_1$.

Now we examine the structure of $D^{(3)}_3$. As in Example \ref{neq3}, a direct computation shows
$ D^{(3)}_3 = { ( a_{011} a_{100} - a_{010} a_{101} - a_{011} a_{110} + a_{000} a_{111} )}^2.$
Combining this with the above sum-of-squares expression for $D^{(n)}_2$, we get a sum-of-squares certificate for $D^{(3)} = D^{(3)}_2 + D^{(3)}_3$.

Next we relate $D^{(n)}_m$ to the polynomial $D^{(m)}_m$ up to relabeling. Consider some term in $D^{(n)}_m$ coming from $d_1$. It is of the form
$$ {( a_{0 {\bf i}} a_{1 {\bf j}} - a_{1 {\bf i}} a_{ 0 {\bf j}} )}^2 , \qquad {\bf i}, {\bf j} \in {\{ 0, 1\}}^{n-1} $$
where ${\bf i}$ and ${\bf j}$ differ in exactly $m-1$ locations. Without loss of generality, we can assume that ${\bf i}$ and ${\bf j}$ differ in their first $m-1$ locations. Forgetting the remaining $n-m$ indices gives a projection onto $D^{(m)}_m$. Repeating for all subsets of $m$ indices gives ${{ n \choose m}}$ copies of $D^{(m)}_m$. We can obtain a sum-of-squares certificate for $D^{(n)}_m$ from one for $D^{(m)}_m$ by re-labeling ${{n \choose m}}$ times and summing.

The rest of the proof is by induction, with the base case $D^{(3)}_3$. For the induction step, we relate $D^{(m)}_m$, where $m \geq 3$, to $D^{(m-1)}_{m-1}$ and $D^{(3)}_3$. We saw above that the polynomial $D^{(m)}$ consists of monomials $a_{\bf i} a_{\bf j}$ with multi-indices ${\bf i}, {\bf j} \in {\{ 0, 1 \}}^m$. Those in $D^{(m)}_m$ have ${\bf i}$ different from ${\bf j}$ in all $m$ locations. For example, the monomial $a_{00\ldots0}a_{11\ldots1}$ appears in $D^{(m)}_m$. For such monomials, the second variable is uniquely determined by the first.

We label the monomials in $D^{(m)}_m$ by ${\{0, 1\}}^{m-1}$ according to the $m-1$ indices that appear after the $0$ in the term that starts with a $0$. These are the $2^{n-1}$ vertices of the following graph. We build an edge between two vertices labeled by ${\bf i}$ and ${\bf j}$ if $a_{0 {\bf i}}$ and $a_{0 {\bf j}}$ appear in the same term in some $d_k$, $1 \leq k \leq n$. Thus each edge of the graph is a summand in $D^{(m)}_m$. The edges are weighted by the coefficient with which the term appears in $D^{(m)}_m$. Those coming from $d_1$ have weight $-1$, while all others have weight $+1$. The positively-weighted edges make the $(m-1)$-dimensional cube. The negatively-weighted edges are the diagonals of this cube. For example, the summand $ {( a_{0 0 0 \ldots 0 } a_{1 1 1 \ldots 1 } - a_{ 1 0 0 \ldots 0} a_{0 1 1 \ldots 1} )}^2 $ contains both $a_{0 0 0 \ldots 0 }$ and $a_{0 1 1 \ldots 1 }$, hence corresponds to the edge between $(0,0,\ldots,0)$ and $(1,1,\ldots,1)$. 

There are $2^{d-2}$ diagonals in the $(m-1)$-dimensional cube. We group them into $2^{d-3}$ pairs, where the two diagonals in a pair differ in their first index. We extract $2^{d-3}$ sub-graphs by considering the edges contained in the four vertices of the two diagonals. We build part of the sum-of-squares certificate from each of the sub-graphs, and then a certificate from the remaining edges. Each sub-graph looks like Figure \ref{sg}.

\begin{figure}[h]
\centering
\includegraphics[width = 2.8cm]{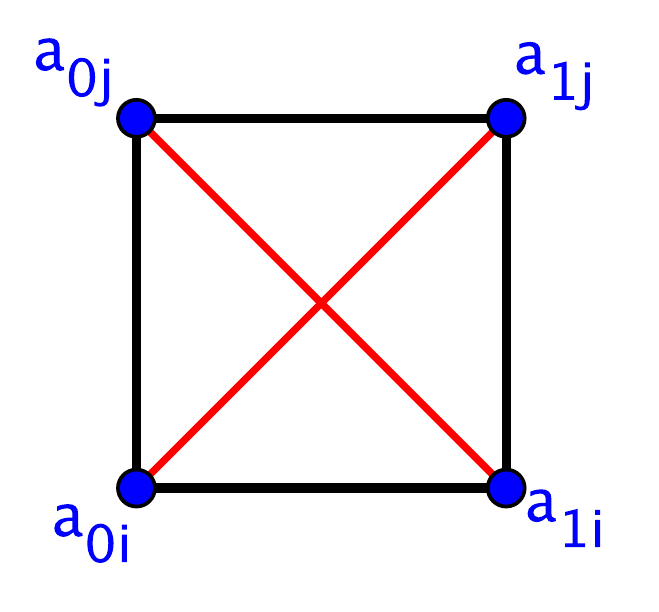}
\caption{A copy of $D_3^{(3)}$ inside $D_m^{(m)}$}
\label{sg}
\end{figure}

The vertical edges in Figure \ref{sg} are positively-weighted in the original graph. The red edges are negatively-weighted in the original graph. The horizontal edges were not in the original graph, but we include them in each sub-graph, at the expense of including them with negative weight among the remaining edges (this ensures they are present with overall weight 0). This graph is the 2-cube with negatively-weighted diagonals. Hence it encodes $D^{(3)}_3$, and thus the polynomial obtained from these sub-graphs has a sum-of-squares certificate.

It remains to consider the structure of the remaining positively and negatively-weighted edges. We have disconnected vertices according to the value of their first index. So we have two cubes of dimension $m-2$. The new negatively-weighted edges are the diagonals of these two smaller cubes. Hence we have two copies of $D^{(m-1)}_{m-1}$. By our induction hypothesis, these both have a sum-of-squares certificate. This concludes the proof. \end{proof}

\begin{proposition}
There are
$$  2^{2n-5} ( 3n - 5) - 2^{n-3} (n^2 - n - 1) $$
terms in the sum-of-squares certificate for $D^{(n)} = d_2 + \cdots + d_n - d_1$ from Theorem \ref{conv}. This formula is valid for all $n \geq 2$.
\end{proposition}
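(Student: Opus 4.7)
The plan is to trace the recursive SOS construction from the proof of Theorem~\ref{conv} and count squared terms stratum by stratum, decomposing $D^{(n)} = D^{(n)}_2 + \sum_{m=3}^n D^{(n)}_m$ by the number $m$ of indices in which the two factors of each monomial differ. For $D^{(n)}_2$, the proof shows that the only surviving contributions are the squared $2 \times 2$ minors (each with coefficient $2$) indexed by 2D rectangles of the $n$-cube whose two spanned coordinate directions both lie in $\{2,\ldots,n\}$, since rectangles touching coordinate $1$ cancel; these number $\binom{n-1}{2} 2^{n-2}$.

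For each $m \geq 3$, I would partition the $m$-difference monomials by the supporting $m$-face $(S,{\bf a})$, where $S \subseteq \{1,\ldots,n\}$ is the $m$-subset of differing coordinates and ${\bf a} \in \{0,1\}^{n-m}$ records the agreement values. Two cases emerge. When $1 \in S$, the restriction of $D^{(n)}_m$ to the face is a relabeled copy of $D^{(m)}_m$ with $N_m$ SOS terms, and there are $\binom{n-1}{m-1} 2^{n-m}$ such faces. When $1 \notin S$, every flattening $k \in S$ enters with coefficient $+1$, so the restriction is simply a sum of the $m \cdot 2^{m-2}$ squared $2 \times 2$ minors living in the face, each a distinct SOS term, across $\binom{n-1}{m} 2^{n-m}$ faces. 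The recursion for $N_m$ is read off from the proof of Theorem~\ref{conv}: $D^{(m)}_m$ decomposes into two copies of $D^{(m-1)}_{m-1}$ plus $2^{m-3}$ copies of $D^{(3)}_3$ (Figure~\ref{sg}), giving $N_m = 2 N_{m-1} + 2^{m-3}$ with $N_3 = 1$, which I would solve to obtain $N_m = (m-2)\,2^{m-3}$.

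Summing all contributions yields
$$ \binom{n-1}{2} 2^{n-2} \;+\; 2^{n-3}\sum_{m=3}^n (m-2) \binom{n-1}{m-1} \;+\; 2^{n-2} \sum_{m=3}^n m \binom{n-1}{m}. $$
The two inner sums collapse under the identities $\sum_j \binom{n-1}{j} = 2^{n-1}$ and $\sum_j j\binom{n-1}{j} = (n-1)2^{n-2}$, once the $j\leq 2$ tails are subtracted: they evaluate to $(n-3)2^{n-2}+1$ and $(n-1)2^{n-2}-(n-1)^2$ respectively. Collecting the $2^{2n-5}$ and $2^{n-3}$ coefficients separately then produces $(3n-5)2^{2n-5} - (n^2 - n - 1)2^{n-3}$, the claimed expression. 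The main obstacle is bookkeeping---keeping the case $1\in S$ versus $1\notin S$ straight, tracking the embedded recursion $N_m$, and handling $m=2$ separately since the $2$-cube coset structure forces a different per-face count---so I would verify the formula at $n = 2, 3, 4, 5$, where it returns $0, 3, 34, 244$, matching Example~\ref{neq3} and the explicit $n=4$ certificate above.
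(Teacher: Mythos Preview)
Your proposal is correct and follows essentially the same route as the paper: the paper also stratifies by $m$, splits each $D^{(n)}_m$ according to whether index~$1$ is among the differing coordinates, derives the same recursion $N_m = 2N_{m-1} + 2^{m-3}$ with solution $(m-2)2^{m-3}$, and arrives at the identical three summands before collapsing them with the same binomial identities. The only cosmetic difference is that you organize the $1\notin S$ count per $m$-face ($m\cdot 2^{m-2}$ distinct minors each) whereas the paper counts the monomial pairs $a_{\mathbf i}a_{\mathbf j}$ globally and divides; both give $m\binom{n-1}{m}2^{n-2}$.
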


\begin{proof}
Recall that we split up the expression $D^{(n)}$ as the sum $D^{(n)} = \sum_{m = 2}^n D^{(n)}_m$, where $D^{(n)}_m$ consists of those terms containing products $a_{\bf i} a_{\bf j}$ in which the multi-indices ${\bf i}$ and ${\bf j}$ differing in $m$ indices. We count the terms that arise in the sum-of-squares certificate for each $m$.

We first count the terms in the sum-of-squares certificate for $D^{(m)}_m$. From the proof of Theorem \ref{conv}, recall that the sum-of-squares certificate for $D^{(m)}_m$ is made from $2^{d-3}$ copies of the certificate for $D^{(3)}_3$, which is comprised of a single squared term, and two copies of $D^{(m-1)}_{m-1}$. This gives a recursive relationship, whose solution is 
$2^{m-3} (m-2)$.

We now split each $D^{(n)}_m$ up into two  pieces, according to whether the multi-indices ${\bf i}$ and ${\bf j}$ differ in their first index, noting that this property is constant on each term of the certificate. First consider those terms that arise from $D^{(n)}_m$ where ${\bf i}$ and ${\bf j}$ differ in their first index. There are ${{ n-1 \choose m-1}}$ choices for the remaining indices that differ. We fix all other $n - m$ indices at value $0$ or $1$, with $2^{n - m}$ choices. We are then left with a copy of $D^{(m)}_m$. Hence, there are
\begin{equation}\label{m1} {{ n-1 \choose m-1}} 2^{n - m} \cdot  2^{m-3} (m -2) = {{ n-1 \choose m-1}} 2^{n - 3} (m -2) \end{equation}
terms overall.

Next, we consider the terms that arise from variables $a_{\bf i} a_{\bf j}$ where ${\bf i}$ and ${\bf j}$ differ in $m$ locations, and they do not differ in their first index. These are terms contributed solely by $d_2 + \cdots + d_n$, hence they are of the form ${( a_{\bf i} a_{\bf j} - a_{\bf k} a_{\bf l})}^2$ where ${\bf k}$ and ${\bf l}$ are obtained from ${\bf i}$ and ${\bf j}$ by swapping a single index. To count such terms, we first count the number of such pairs $a_{\bf i} a_{\bf j}$ that appear. There are ${{ n-1 \choose m }}$ choices for the $m$ indices at which ${\bf i}$ and ${\bf j}$ differ. Let ${\bf i}$ and ${\bf j}$ be ordered so that ${\bf i}$ is 0 at the first location where they differ. There are $2^{n-1}$ choices for the indices of ${\bf i}$, and these determine those of ${\bf j}$. Furthermore, each term $a_{\bf i} a_{\bf j}$ appears in $m$ times in the certificate. The terms in which it appears are all distinct when $m \geq 3$. Two such pairs comprise each term of the certificate, hence there are
\begin{equation}\label{m2} {{ n-1 \choose m}} 2^{n-2} m \end{equation}
terms when $m \geq 3$. The case $m = 2$ is similar, except that and each term occurs with coefficient two, so we have a count of $ {{ n - 1 \choose 2}} 2^{n-2}$ terms. Summing \eqref{m1} and \eqref{m2} from $m = 3$ to $n$, and including the case $m=2$, we get the desired formula for $n \geq 3$.

The result also holds when $n=2$, but with a different argument. The formula evaluates to $0$ when $n = 2$. The set-up in this case is of a $2 \times 2$ matrix. The two Gram determinants $d_1$ and $d_2$ arise as the determinant of a matrix and its transpose respectively. Hence $d_1 - d_2 = 0$, and $0$ terms suffice for the trivial sum-of-squares certificate. The formula also evaluates to $0$ in the case $n = 1$. \end{proof}

We now consider tensors that map to the boundary of the convex hull of the Gram locus.

\begin{corollary}
Real binary tensors with Gram determinants satisfying
$D^{(n)} = d_2 + \cdots + d_n - d_1= 0$
have only two determinants non-zero: $d_1$ and one other. They are given by the tensor product of a $2 \times 2$ matrix, $M$, with $n-2$ vectors, $v^{(j)}$, according to the formula:
$$ a_{i_1 \ldots i_n} = M_{i_1 i_j} v^{(2)}_{i_2} \cdots \widehat{v^{(j)}_{i_j}}\cdots v^{(n)}_{i_n} ,$$
where $\widehat{v^{(j)}_{i_j}}$ denotes the omission of the $j$th term from the product. Conversely, all tensors of this form satisfy $D^{(n)} = 0$.
\end{corollary}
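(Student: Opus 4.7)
The ``if'' direction is a direct computation: for $a_{i_1 \ldots i_n} = M_{i_1 i_j} \prod_{k \neq 1, j} v^{(k)}_{i_k}$, any $k$-th flattening with $k \notin \{1, j\}$ has proportional rows (each is a scalar multiple of the same rank-one $(n-1)$-tensor), so $d_k = 0$; the first and $j$-th flattenings both factor through $M$, yielding $d_1 = d_j = \det(MM^T) \prod_{k \neq 1, j} \|v^{(k)}\|^4$. Thus $D^{(n)} = d_j - d_1 = 0$.

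For the converse, I invoke the SOS decomposition $D^{(n)} = \sum_{m=2}^n D^{(n)}_m$ from the proof of Theorem \ref{conv}, in which each $D^{(n)}_m$ is itself a sum of squares. Hence $D^{(n)} = 0$ forces every squared summand to vanish. The summands of $D^{(n)}_2$ are precisely the $2 \times 2$ minors $(a_{\mathbf{i}} a_{\mathbf{j}} - a_{\mathbf{i}'} a_{\mathbf{j}'})^2$ in which $\mathbf{i}$ and $\mathbf{j}$ agree in the first coordinate; fixing $i_1 \in \{0, 1\}$, these are the $2 \times 2$ minors of the slab $A_{i_1}$ in every pair of directions, whose vanishing characterizes the Segre variety $(\P^1)^{n-1}$. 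So each non-zero slab has rank one. If either slab vanishes, $A$ is itself a rank-one $n$-tensor and fits the form (with $M$ of rank one); otherwise I write $A_0 = \bigotimes_{l=2}^n u^{(l)}$ and $A_1 = \bigotimes_{l=2}^n w^{(l)}$.

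Under this parametrization, a direct Gram calculation (exploiting that the ``$i_1 = 0$'' and ``$i_1 = 1$'' contributions in each non-first flattening are orthogonal) yields
\[ d_k = \det\bigl[u^{(k)} \,\vert\, w^{(k)}\bigr]^2 \prod_{l \neq 1, k} a_l \quad (k \geq 2), \qquad d_1 = \prod_{l=2}^n a_l - \prod_{l=2}^n b_l, \]
with $a_l = \|u^{(l)}\|^2 \|w^{(l)}\|^2$ and $b_l = \langle u^{(l)}, w^{(l)}\rangle^2$. Setting $x_l := a_l - b_l \geq 0$, the remaining and main task is to deduce from $D^{(n)} = 0$ that at most one $x_l$ is non-zero. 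If some $b_{l_0} = 0$, the second product collapses and $D^{(n)}$ reduces to $\sum_{k \neq 1, l_0} x_k \prod_{l \neq 1, k} a_l$, a sum of non-negative terms with strictly positive coefficients, forcing $x_k = 0$ for every $k \neq 1, l_0$. If instead every $b_l > 0$, telescoping $\prod a_l - \prod b_l$ rewrites $D^{(n)}$ as $\sum_{2 \leq j < k \leq n} x_j x_k C_{j,k}$ with strictly positive coefficients $C_{j, k} = \prod_{2 \leq l < j} b_l \prod_{l > j, l \neq k} a_l$, again forcing at most one $x_l$ to be non-zero. Denoting by $j$ the unique index with $x_j \neq 0$, we have $u^{(l)} \parallel w^{(l)}$ for every $l \neq 1, j$; writing $w^{(l)} = \lambda_l u^{(l)}$ and assembling rescaled copies of $u^{(j)}$ and $w^{(j)}$ as the rows of a $2 \times 2$ matrix $M$ yields the claimed tensor product expression.
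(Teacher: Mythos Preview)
Your argument has one genuine gap. You claim that the vanishing of the summands of $D^{(n)}_2$ forces each slab $A_{i_1}$ to have rank one. But the summands of $D^{(n)}_2$ are only those $2\times 2$ minors of a slab in which the two multi-indices differ in \emph{exactly two} places---the ``face'' minors. These do \emph{not} cut out the Segre variety once $n\ge 4$: for instance, the $2\times 2\times 2$ tensor with $a_{000}=a_{111}=1$ and all other entries zero has every face minor equal to zero yet has rank two. The fix is immediate: the SOS certificate from Theorem~\ref{conv} also contains, for every $m\ge 3$, all squared minors $(a_{\mathbf i}a_{\mathbf j}-a_{\mathbf k}a_{\mathbf l})^2$ in which $\mathbf i$ and $\mathbf j$ agree in their first coordinate and differ in $m$ others (these come from $d_2+\cdots+d_n$ alone and are recorded as part of $D^{(n)}_m$ in the paper's proof). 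Together these are \emph{all} $2\times 2$ minors of every flattening of each slab, and their simultaneous vanishing does characterize rank one. So replace ``the summands of $D^{(n)}_2$'' by ``the summands of the full SOS certificate in which the first index is constant'' and your slab decomposition $A_0=\bigotimes u^{(l)}$, $A_1=\bigotimes w^{(l)}$ is justified.

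With that repaired, your argument is correct and takes a different route from the paper's. Both proofs first use the slab-constant summands of the certificate to reduce to two rank-one slices. From there the paper invokes one further type of SOS summand---the four-term squares of the shape $(a_{01\mathbf j}a_{10\mathbf i}+a_{00\mathbf i}a_{11\mathbf j}-a_{01\mathbf i}a_{10\mathbf j}-a_{00\mathbf j}a_{11\mathbf i})^2$ coming from the copies of $D^{(3)}_3$---and argues by contradiction that a third flattening cannot have rank two. You instead compute the Gram determinants explicitly in the parameters $u^{(l)},w^{(l)}$ and reduce $D^{(n)}=0$ to a positivity statement $\sum_{j<k}x_jx_kC_{j,k}=0$ with $C_{j,k}>0$, forcing at most one $x_l\neq 0$. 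Your route is more self-contained once the slab structure is in hand and yields the tensor-product form directly; the paper's route stays closer to the combinatorics of the certificate and never writes down the $d_k$ explicitly.
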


Such tensors represent the joint probability distributions of $n$ binary random variables $X_1, \ldots, X_n$ that satisfy the independence statement $ X_i \perp \{ X_1, X_j \} $, for all $i \neq j $. The full independence model is the special case $d_1 = \cdots = d_n = 0$.

\begin{proof}
The hypothesis that $D^{(n)} = 0$ means all terms in the sum-of-squares certificate for $D^{(n)}$ vanish. We assume that the first and second determinants, $d_1$ and $d_2$, are non-zero. Without loss of generality, it suffices to show that the third determinant vanishes.

Write out the second flattening of the tensor, arranging the columns in two blocks according to the value of the first index
$$ T^{(2)} = \begin{bmatrix} \leftarrow & a_{00*} & \rightarrow & \leftarrow & a_{10*} & \rightarrow \\ \leftarrow & a_{01*} & \rightarrow & \leftarrow & a_{11*} & \rightarrow \end{bmatrix} .$$
All $2 \times 2$ minors upon which the first index is constant appear as terms in the sum-of-squares certificate for $D^{(n)}$ (see the proof of Theorem \ref{conv}). Therefore the left and right hand halves of $T^{(2)}$ are two rank one matrices. Say they are given by multiples of vectors ${\bf x}$ and ${\bf y}$ respectively, of length $2^{n-2}$. We write
$$ T^{(2)} = \begin{bmatrix} t_0 {\bf x} & s_0 {\bf y} \\ t_1 {\bf x} & s_1 {\bf y} \end{bmatrix} .$$

We now write the third flattening in terms of vectors ${\bf x}$ and ${\bf y}$. We write ${\bf x} = \begin{bmatrix} {\bf x}_0 & {\bf x}_1 \end{bmatrix}$, where the entries of ${\bf x}$ are arranged according to the value of the third index: ${\bf x}_0$ are those entries of the tensor with a 0 in their third index, and ${\bf x}_1$ are those with a 1 in their third index. Similarly for ${\bf y}$. We can then write the third flattening as
$$ T^{(3)} = \begin{bmatrix} t_0 {\bf x}_0 & t_1 {\bf x}_0 & s_0 {\bf y}_0 & s_1 {\bf y}_0 \\ t_0 {\bf x}_1 & t_1 {\bf x}_1 & s_0 {\bf y}_1 & s_1 {\bf y}_1 \end{bmatrix} .$$
Just as for the second flattening, we have organized the columns of the third flattening according to the value of the first index. So the matrix is formed of two rank one matrices concatenated side-by-side. This implies that there exists vectors ${\bf x}'$ and ${\bf y}'$ such that
\begin{equation} \label{t3} T^{(3)} = \begin{bmatrix} \alpha_0 t_0 {\bf x}' & \alpha_0 t_1 {\bf x}' & \beta_0 s_0 {\bf y}' & \beta_0 s_1 {\bf y}' \\ \alpha_1 t_0 {\bf x}' & \alpha_1 t_1 {\bf x}' & \beta_1 s_0 {\bf x}' & \beta_1 s_1 {\bf y}' \end{bmatrix} .\end{equation}
The term
$$ {( a_{01{\bf j}}a_{10{\bf i}} + a_{00{\bf i}}a_{11{\bf j}} - a_{01{\bf i}}a_{10{\bf j}} - a_{00{\bf j}}a_{11{\bf i}})}^2 $$
appears in a sum-of-squares certificate for $D^{(n)}$, for all ${\bf i}$ and ${\bf j}$, as follows. Let $m$ be such that ${\bf i}$ and ${\bf j}$ differ in $m-2$ indices. Projecting to the $m$ indices consisting of these and the first two, we obtain one of the combinations of six minors from $D^{(m)}_m$ depicted in Figure \ref{sg}. Hence it must be zero. Substituting in our expression in \eqref{t3} for the entries of the tensor yields the equation
$$ ( \alpha_1 \beta_2 s_1t_2 + \alpha_2 \beta_1 t_1 s_2 - \alpha_2 \beta_1 s_1 t_2 - \alpha_1 \beta_2 s_2 t_1 ) x_{\bf i}' y_{\bf j}' = 0, \quad \text{for all ${\bf i}$ and ${\bf j}$}$$
where the entry of ${\bf x}'$ corresponding to multi-index ${\bf i}$ is denoted $x_{{\bf i}}'$, and likewise for ${\bf y}'$. Hence one of ${\bf x}'$ and ${\bf y}'$ must be zero, which contradicts $T^{(2)}$ being full rank, or
$( \alpha_2 \beta_1 - \alpha_1 \beta_2 ) ( s_2 t_1 - s_1 t_2 ) = 0$
which shows that $T^{(3)}$ is rank one, and hence $d_3 = 0$, as required.
\end{proof}

The following example shows that the above inequalities in the Gram determinants do not always hold for tensors of size $m_1 \times m_2 \times \cdots \times m_n$ with some $m_i > 2$.

\begin{example}
Consider the $2 \times 2 \times 3$ tensor with entries
$$ T_{111} = \frac{1}{\sqrt{2}}, \quad T_{213} = \frac{1}{\sqrt{2}}, \quad T_{ijk} = 0 \text{ otherwise } .$$
A computation shows that $d_1 = \frac{1}{4}$ while $d_2 = d_3 = 0$. This tensor can be appropriately included into larger tensor formats to show the result for fixed larger sizes.
\end{example}

\section{The Semi-Algebraic Description}\label{3}

We seek a semi-algebraic description for the Gram locus, 
the image of $\mathcal{G}(\mathcal{B})$. We begin with the case $n = 3$, where
$$\mathcal{G} : \R^2 \otimes \R^2 \otimes \R^2 \to \R^3, \quad (a_{ijk}) \mapsto (d_1, d_2, d_3) ,$$ 
and
$$ \mathcal{B} = \left\{ (a_{ijk}) \in \R^2 \otimes \R^2 \otimes \R^2 : \sum_{ijk} a_{ijk}^2 \leq 1 \right\} .$$

\begin{proof}[Proof of Theorem \ref{thm:neq3}]
We first find the Zariski closure of the boundary of the image $\mathcal{G}(\mathcal{B})$. Following the approach in \cite{KPS}, this is contained in the branch locus of the map $\mathcal{G}$ and that of its restriction to the boundary $\partial \mathcal{B} =  \left\{ (a_{ijk}) \in \R^2 \otimes \R^2 \otimes \R^2 : \sum_{ijk} a_{ijk}^2 = 1 \right\} $. These branch loci are $p$ and $q$ respectively, obtained by direct computation (using the boxed code on page 14):
$$ p = d_1 d_2 d_3 (d_1 - d_2 )(d_1 - d_3) (d_2 - d_3),$$
$$ \hspace{11ex} q = \prod_{i < j} (d_i - d_j ) \times \prod_{i} \left( d_i - \frac{1}{4} \right) \times Q \text{ , } \qquad \text{where}$$
\vspace{-2ex}
\begin{multline*} Q = \prod_{i = 1}^3 \left( \sum_{j \neq i} d_j - d_i \right) - \frac{1}{2} \times \prod_{ (i,j,k) \in \{ \pm 1 \}^3 } ( i \sqrt{d_1} + j \sqrt{d_2} + k \sqrt{d_3} ) = Q_1 - Q_2 \\ \hspace{3.7ex} =  (d_1 + d_2 - d_3 )( d_1 - d_2 + d_3 ) ( - d_1 + d_2 + d_3 ) - \frac{1}{2} {\left( d_1^2 + d_2^2 + d_3^2 - 2( d_1 d_2 + d_1 d_3 + d_2 d_3 ) \right)}^2 .\end{multline*}
The polynomial $Q$ is the non-linear part of the boundary, depicted in Figure \ref{fig1}. The Zariski closure of the boundary of $\mathcal{G}(\mathcal{B})$ is contained in the vanishing locus of $p$ and $q$, $V(pq)$.

\vspace{-2ex}

\begin{figure}[H]
\centering
\includegraphics[width = 8.6cm]{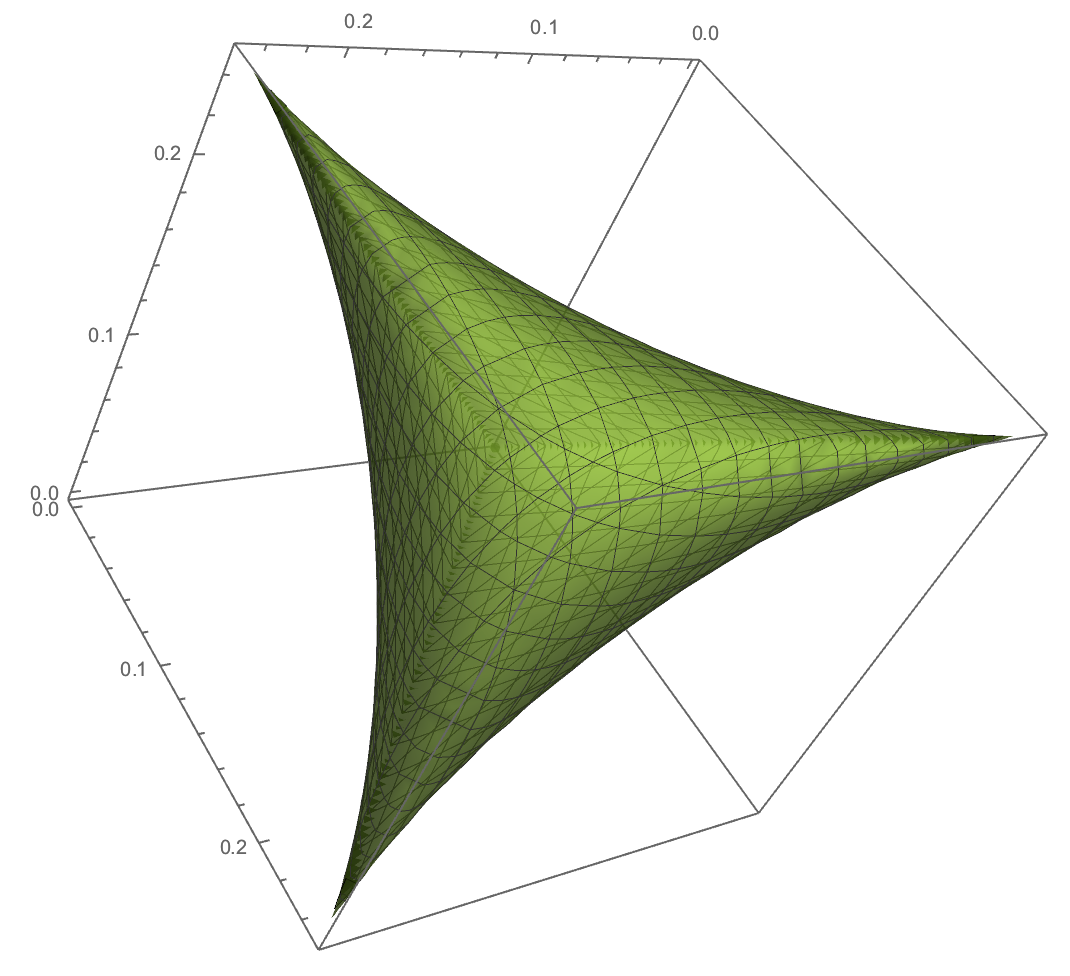}
\caption{The surface $Q = 0$}
\label{fig1}
\end{figure}

The image $\mathcal{G}(\mathcal{B})$ is the closure of the union of some connected components in $\R^3 \backslash V(pq)$: each connected component is either contained in the image, or disjoint from it. Hence it suffices to consider components contained inside the convex hull of $\mathcal{G}(\mathcal{B})$. Figure \ref{fig1} shows that $[0,\frac{1}{4}]^3 \backslash V(Q)$ has five connected components. The connected component containing $( \frac{1}{4} -\epsilon, \epsilon, \epsilon)$, for $\epsilon > 0$ sufficiently small, intersects the set $d_1 > d_2 + d_3$, hence by Theorem \ref{conv} it is not contained in the image. There are three such components by symmetry. The interior of the surface $V(Q)$ is contained in the convex hull of the image. Likewise for the component containing the point $(\frac{1}{4} - \epsilon, \frac{1}{4} - \epsilon, \frac{1}{4} - \epsilon)$, for $\epsilon > 0$ sufficiently small. A direct computation finds tensors that map to each connected component of $[0,\frac{1}{4}]^3 \backslash V(pq)$ in these two last pieces, hence they are the image $\mathcal{G}(\mathcal{B})$.

It remains to find the semi-algebraic description. The interior of the surface $V(Q)$ is given by $Q \geq 0$. The surface $Q = Q_1 - Q_2$ meets the plane $d_1 = \frac{1}{4}$ along the planar curve ${(d_2 - d_3)}^2 +\frac{1}{2} ( d_2+ d_3) - \frac{3}{16} $ with multiplicity two. Imposing that all three such polynomials, obtained by relabeling, be positive yields the component of $[0, \frac{1}{4}]^3 \backslash V(Q)$ containing the point $(\frac{1}{4} - \epsilon, \frac{1}{4} - \epsilon, \frac{1}{4} - \epsilon)$.\end{proof}

\begin{figure}[h]
\centering
\includegraphics[width = 5cm]{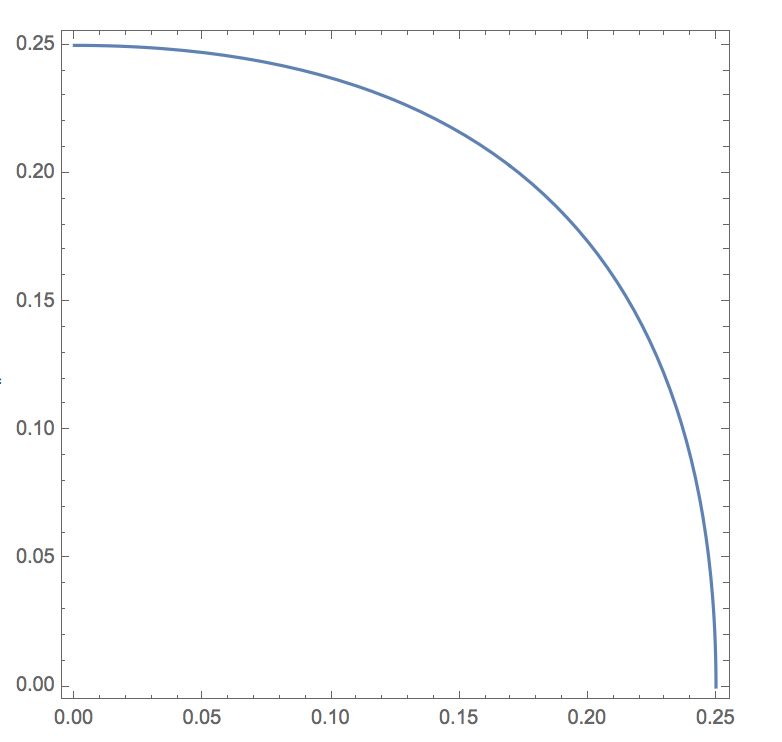}
\caption{The surface $Q = 0$ meets the plane $d_1 = 1/4$}
\label{fig2}
\end{figure}


Polynomials $p$ and $q$ from the proof of Theorem \ref{thm:neq3} are computed in Macaulay2 as follows. Computational speed-ups are obtained by changing coordinates from the $a_{ijk}$, the eight entries of the array, to coordinates $x_{ijk}$ that are invariant under the orthogonal group $O_2 \times O_2 \times O_2$. The variables \verb|di| refer to the determinants, while \verb|t| is the trace of any flattening.

\vspace{2ex}
\cprotect\fbox{
  \begin{minipage}{0.8\textwidth}
Make two ideals (using the $x_{ijk}$ coordinates):
\begin{verbatim}
C1 = minors(3,jacobian(ideal(d1,d2,d3)));
C2 = minors(4,jacobian(ideal(d1,d2,d3,t)))+ideal(1-t);
\end{verbatim}
Saturate with respect to the known ramification locus:
\begin{verbatim}
c = ideal((d1 - d2)*(d1 - d3)*(d2 - d3));
C1 = C1:c; C2 = C2:c;
\end{verbatim}
Project $C_1$ and $C_2$ to the ring $\Q[d_1,d_2,d_3]$ to obtain $p$ and $q$ respectively. The computation takes 5 minutes.
  \end{minipage}
}
\vspace{1ex}

Section \ref{1} shows how to convert determinantal constraints to the higher order singular value coordinates. In \cite{HU}, the authors work in the three-dimensional space of the highest singular values from each flattening. The image of $Q=0$ in these coordinates is depicted in Figure \ref{fig3}. The point of the star near $(1,1,1)$ is the true algebraic description for the experiments with random tensors in \cite[Figure 3.1]{HU}.

\begin{figure}[H]
\centering
\includegraphics[width = 9cm]{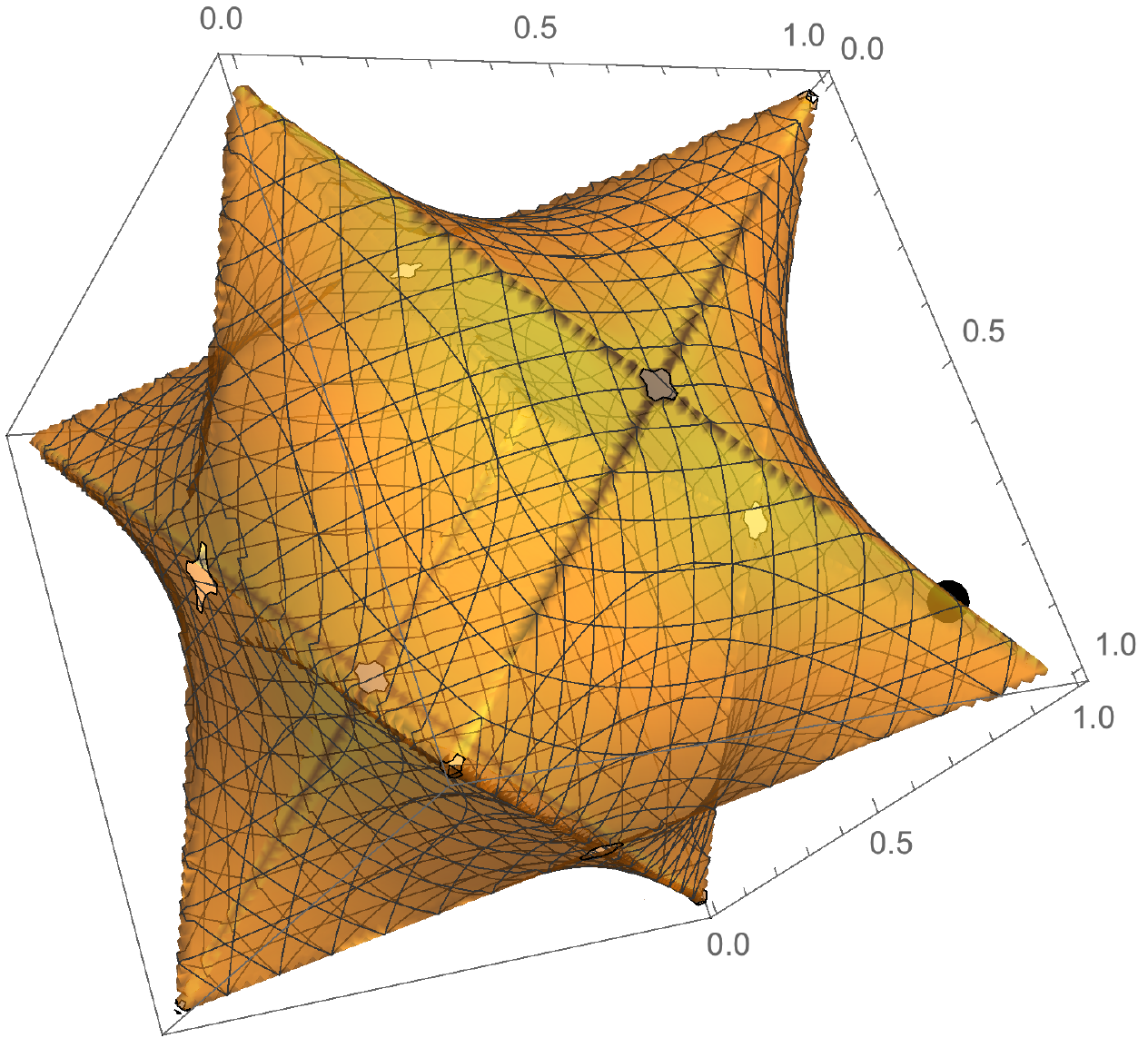}
\caption{The surface $Q = 0$ in singular value coordinates. The black dot is Example \ref{counter}.}
\label{fig3}
\end{figure}

Conjecture \ref{conj:main} describes, for $n \geq 4$, the image $\mathcal{G}(\mathcal{B})$ inside the cube $[0, \frac{1}{4}]^n$ by the single polynomial inequality $Q_1 \geq Q_2$. The reason for this discrepancy with the $n=3$ case can be understood by evaluating $Q_1 - Q_2$ when all $d_i = \frac{1}{4}$. We obtain
$$ Q_1 \left( \frac{1}{4}, \ldots, \frac{1}{4} \right) = {\left(\frac{n - 2}{4}\right)}^n , \quad Q_2 \left( \frac{1}{4}, \ldots, \frac{1}{4} \right) = \frac{1}{2^{2^n+1}} \prod_{k = 0}^n {( n - 2k)}^{{ n \choose k}} .$$
The value of $Q_2$ is $0$ for all even $n$. Among odd $n$, the difference $Q_1 - Q_2$ grows in $n$, and is positive for all $n \geq 5$. Hence the connected component of the complement of $V(Q_1 - Q_2)$ containing the point $(\frac{1}{4} - \epsilon, \frac{1}{4} - \epsilon, \frac{1}{4} - \epsilon)$, for some small $\epsilon > 0$, is the same as the piece $Q_1 - Q_2 \geq 0$ for all $n \geq 4$. The sufficiency of Conjecture \ref{conj:main} has been tested for one million tensors, with $4 \leq n \leq 7$.





The boundary of the Gram locus $\mathcal{G}(\mathcal{B})$ contains parts of all the hyperplanes $d_i = \frac{1}{4}$. If a tensor of norm one lies on the hyperplane $d_i = \frac{1}{4}$, its singular values in the $i$th flattening are both $\frac{1}{\sqrt{2}}$ and, in particular, are the same. However, the following example shows that not all tensors on the boundary of the Gram locus have two singular values the same in some flattening. Since the change of coordinates given by Equation \eqref{coord} does not map boundary points to the interior, this disproves the conjecture stated in Section 1 of \cite{HU}.

\begin{example}\label{counter}
Consider the tensor
$$ T_{111} = \frac{1}{\sqrt[4]{2}}, \qquad T_{101} = T_{011} = \sqrt{\frac{1}{2} - \frac{1}{ 2 \sqrt{2} }} , \quad T_{ijk} = 0 , \text{ otherwise }.$$
Its tuple of Gram determinants,
$$(d_1, d_2, d_3) = \left(\frac{1}{8}, \frac{1}{8}, \frac{ \sqrt{2} - 1}{2} \right),$$
lies on the part of $V(Q)$ that contributes to the boundary of $\mathcal{G}(\mathcal{B})$. The higher order singular values are: the square roots of $\frac{1 + \sqrt{2}}{2 \sqrt{2}}$ and $\frac{1}{2} - \frac{1}{2 \sqrt{2}}$, for flattenings one and two, and the square roots of $\frac{1}{\sqrt{2}}$ and $1 - \frac{1}{\sqrt{2}}$ in the third flattening. It is labeled in Figure \ref{fig3} by a black dot which can be seen to lie on the boundary hypersurface.
\end{example}

\section{The Fibers}\label{4}

We conclude the paper with a discussion of the fibers of the Gram determinant map
$$ \mathcal{G}^{-1} (d_1 , \ldots, d_n ) \subseteq \R^{2} \otimes \cdots \otimes \R^{2}.$$
Each fiber is defined by $n$ non-homogeneous quartics in the space of binary tensors. It consists of a union of orbits under the orthogonal equivalence action for binary tensors, $O_2 \times \cdots \times O_2$ \cite[Proposition 2.2]{HU}. Dimension counting reveals that the quotient
$$ \mathcal{G}^{-1} (d_1 , \ldots, d_n ) / \left( O_2 \times \cdots \times O_2 \right) $$
has dimension exponentially sized in $n$. Recall from Section \ref{1} that, for tensors of fixed norm, the Gram determinant map $\mathcal{G}$ corresponds to the map sending a tensor to its higher order singular values. Hence, while in the matrix case the singular values define a matrix up to orthogonal equivalence, the same is not true of tensors and their higher order singular values. 

Distinguishing between distinct equivalence classes inside the fiber would allow this troublesome gap to be bridged. A direct computation proves the following for the case $n = 3$.

\begin{theorem}\label{2x2x2}
A $2 \times 2 \times 2$ tensor is defined up to orthogonal equivalence by its higher order singular values and its hyperdeterminant.
\end{theorem}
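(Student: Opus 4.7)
My plan is to canonicalize $T$ under $O_2 \times O_2 \times O_2$ and then directly invert the invariant map. View $T$ as a pair of $2 \times 2$ slices $(T_0, T_1)$ from the first flattening. Using the $O_2 \times O_2$ action in modes $2$ and $3$, I would first diagonalize $T_0 = \mathrm{diag}(\alpha, \beta)$ with $\alpha \geq \beta \geq 0$ by ordinary SVD. The mode-$1$ $O_2$ sends $(T_0, T_1) \mapsto (a T_0 + b T_1, c T_0 + d T_1)$; I would use this remaining freedom to impose $\mathrm{tr}(T_0 T_1^T) = 0$, that is $\alpha p + \beta s = 0$, where $T_1 = \left(\begin{smallmatrix} p & q \\ r & s \end{smallmatrix}\right)$. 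This yields a five-parameter canonical form $(\alpha, \beta, p, q, r)$ with $s = -\alpha p/\beta$ determined, matching the dimension $8 - 3 = 5$ of the quotient $\R^{2 \times 2 \times 2} / O_2^3$. The residual stabilizer is a finite group $G$ of sign changes acting on the entries of $T_1$.

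Next I would express the five invariants as polynomials in the canonical parameters. The norm $t$ and the Gram determinants $d_1, d_2, d_3$ are immediate sum-of-squares expressions, and by Equation \eqref{coord} they carry exactly the same information as the higher order singular values. For Cayley's hyperdeterminant I would use $\Delta = \mu^2 - 4 \det(T_0)\det(T_1)$ with $\mu = \det(T_0 + T_1) - \det T_0 - \det T_1 = \alpha s + \beta p$. Combined with the orthogonality constraint, this rewrites as $\Delta = p^2(\alpha^2 + \beta^2)^2/\beta^2 + 4\alpha\beta q r$, exhibiting $\Delta$ as a polynomial function of $(\alpha, \beta, p, q, r)$ together with a single global sign from the entries. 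The theorem reduces to showing that the polynomial map $\Psi : (\alpha, \beta, p, q, r) \mapsto (t, d_1, d_2, d_3, \Delta)$ has fibers equal to $G$-orbits.

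To execute the inversion, I observe that $(t, d_1)$ determine the pair $\{\alpha^2 + \beta^2, \ p^2 + q^2 + r^2 + s^2\}$ as the two roots of $x^2 - tx + d_1 = 0$, and the orthogonality relation gives $\alpha^2 p^2 = \beta^2 s^2$. The equations coming from $d_2$ and $d_3$ then cut out a one-parameter curve in the remaining canonical parameters (generically parametrized by $A = \alpha^2$), and a short Gröbner-basis or resultant calculation shows that $\Delta$ is a function of this parameter that is injective modulo $G$. This is the ``direct computation'' referenced in the statement. The main obstacle is handling the degenerate strata --- when $\alpha = \beta$, when $\beta = 0$, when some higher order singular values coincide in a flattening, or when $T_1$ has rank zero or one --- where the canonical form is non-unique and the residual group $G$ enlarges. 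Each such stratum must be treated as a limiting case of the generic argument, using the continuity of all five invariants and of the $O_2^3$-orbit map.
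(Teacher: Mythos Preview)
The paper's own ``proof'' is a single sentence (``A direct computation proves the following''), with the hint elsewhere that the computation was carried out in an $O_2\times O_2\times O_2$-invariant coordinate system $(x_{ijk})$; the implicit argument is invariant-theoretic: since the acting group is compact, real polynomial invariants separate orbits, so one verifies in Macaulay2 that $t,d_1,d_2,d_3,\mathrm{hyperdet}$ already separate points of the invariant ring. Your proposal is a genuinely different route: rather than computing invariants, you pass to a slice (a canonical form) and attempt to invert the five invariants there. Your approach is more explicit and would, if completed, give actual formulas for the orbit representative, which the invariant-ring computation does not; the price is that you must handle the degenerate strata by hand, whereas the invariant-theoretic argument disposes of them for free.

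Two concrete issues. First, your normalization steps are in the wrong order: after you apply a mode-$1$ rotation to impose $\mathrm{tr}(T_0T_1^T)=0$, the new first slice $aT_0+bT_1$ is no longer diagonal. The fix is trivial---do the mode-$1$ step first, then SVD $T_0$ via modes $2,3$, which preserves $\mathrm{tr}(T_0T_1^T)$---so your claimed five-parameter slice does exist, but not by the procedure as written. Second, the appeal to continuity for the degenerate strata is not a proof: knowing that $\Psi$ is generically injective modulo $G$ does not prevent two distinct boundary orbits from colliding in the image. You would need either a separate computation on each stratum, or an argument that the map from the orbit space (which is Hausdorff, since $O_2^3$ is compact) to $\R^5$ is proper and generically injective, hence injective. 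Since the actual elimination (``short Gr\"obner-basis or resultant calculation'') is also deferred, the proposal at present is a reasonable outline but not yet a proof---which, in fairness, is exactly the status of the paper's own argument.
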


The hyperdeterminant is the unique (up to scale) $SL_2 \times SL_2 \times SL_2$ invariant. It is given by the formula
\vspace{-1ex}
$$ 
 a_{000}^2 a_{111}^2
+a_{001}^2 a_{110}^2
+a_{010}^2 a_{101}^2
+a_{011}^2 a_{100}^2
+4 a_{000} a_{011} a_{101} a_{110}
+4 a_{001} a_{010} a_{100} a_{111} $$
\vspace{-4.5ex}
$$
-2 a_{000} a_{001} a_{110} a_{111}
-2 a_{000} a_{010} a_{101} a_{111}
-2 a_{000} a_{011} a_{100} a_{111}$$
\vspace{-4ex}
$$
-2 a_{001} a_{010} a_{101} a_{110}
-2 a_{001} a_{011} a_{100} a_{110}
-2 a_{010} a_{011} a_{100} a_{101} 
.$$
Theorem \ref{2x2x2} says that if two tensors are related by a change of basis and have the same higher order singular values, they are related by an {\em orthogonal} change of basis. The result extends to tensors of multilinear rank $(2,2,\ldots,2)$ by projecting to the minimal subspaces. Letting
$ t = \sum_{ij\ldots k} a_{ij\ldots k}^2 $,
the fibers of the map
$$ (d_1 , d_2, d_3, t, {\rm hyperdet} ) : \R^2 \otimes \R^2 \otimes \R^2 \to \R^5 ,$$
are the equivalence classes of tensors under the $O_2 \times O_2 \times O_2$ orthogonal equivalence action. Successful extension of Theorem \ref{2x2x2} to higher $n$ would yield a summary of tensors up-to-orthogonal-equivalence. 

We lastly consider the map that sends a tensor of general format $m_1 \times \cdots \times m_n$ to its higher order singular values, the singular values of each principal flattening. Given a fixed tensor $T$, the tensors $S$ in the same fiber as $T$ are those whose $i$th principal flattening is orthogonally equivalent to the $i$th principal flattening of $T$, for all $1 \leq i \leq n$. In particular, the first flattening is orthogonally equivalent to the first flattening of $T$, hence $S \in ( O_{m_1} \times O_{m_2 \cdots m_n} ) \cdot T$. Repeating for all $1 \leq i \leq n$, we obtain that the fiber is exactly those tensors $S$ for which
$$ S \, \, \in \, \, \bigcap_i ( O_{m_i} \times O_{m_1 \cdots \widehat{m_i} \cdots m_n} \cdot T ) .$$
However, the element of the group $O_{m_i} \times O_{m_1 \cdots \widehat{m_i} \cdots m_n}$ will be different for each $i$. Indeed,
$$ \bigcap_i \left( O_{m_i} \times O_{m_1 \ldots \widehat{m_i} \ldots m_n} \right) = O_{m_1} \times \cdots \times O_{m_n} ,$$
so tensors in the same fiber, which also differ by the same matrices in each flattening, are actually orthogonally equivalent. We can only express the fiber as the above intersection of $n$ orbits, not as a single orbit. It is an open problem to extend Theorem \ref{2x2x2} to larger tensor formats: to add minimal additional invariants such that the fibers are single orthogonal equivalence classes. 

\subsection*{Acknowledgements}

I would like to thank my advisor Bernd Sturmfels for helpful discussions. Thanks also to Jan Draisma for information about an orthogonally invariant basis for $2 \times 2 \times 2$ tensors, and to Andre Uschmajew and Nick Vannieuwenhoven for useful comments. I received partial funding from the Pachter Lab and NIH grant R01HG008164.




\begin{small}

\end{small}
\end{document}